\UseRawInputEncoding
\documentclass[11pt]{article}
\usepackage{amssymb,amsmath}
\usepackage{graphics}
\usepackage{float}
\usepackage{booktabs}
\usepackage{subfigure}
\usepackage{color,fullpage}

\usepackage{cite}
\usepackage{bm}
\usepackage[title,titletoc]{appendix}
\usepackage{algorithm}%
\usepackage{algorithmicx}%
\usepackage{multirow}
\usepackage{pgfplots}
\usepackage{amsthm}
\usepackage{enumerate}

\newtheorem{assumption}{Assumption}[section]

\newtheorem{theorem}{Theorem}[section]
\newtheorem{lemma}{Lemma}[section]

\newtheorem{remark}{Remark}[section]
\newtheorem{proposition}{Proposition}[section]

\makeatletter

\@addtoreset{equation}{section} \makeatother

\begin{document}

\title{Relaxed Greedy Randomized Kaczmarz with Signal Averaging for Solving Doubly-Noisy Linear Systems}

\author{
Lu Zhang\thanks{College of Science, National University of Defense Technology,
	Changsha, Hunan 410073, China.  Email: \texttt{zhanglu21@nudt.edu.cn}}
\and Jinchuan Zeng\thanks{College of Science, National University of Defense Technology,
	Changsha, Hunan 410073, People's Republic of China. Email: \texttt{zengjinchuan23@nudt.edu.cn}}
\and Hui Zhang\thanks{Corresponding author.
College of Science, National University of Defense Technology,
Changsha, Hunan 410073, China.  Email: \texttt{h.zhang1984@163.com}
}
}


\date{\today}

\maketitle

\begin{abstract}	
Large-scale linear systems of the form $Ax=b$ are often doubly-noisy, in the sense that both its measurement matrix $A$ and measurement vector $b$ are noisy.
In this paper, we extend the relaxed greedy randomized Kaczmarz (RGRK) method to the doubly-noisy systems to accelerate convergence. However, RGRK fails to converge
to the least-squares solution for doubly-noisy systems. 
To address this limitation, we propose a simple modification: averaging multiple measurements instead of using a single measurement. The proposed RGRK with signal averaging (RGRK-SA) converges to the solution of doubly-noisy systems at a polynomial rate. 
Numerical experiments demonstrate that both RGRK and RGRK-SA outperform the classical randomized Kaczmarz method, and RGRK-SA has a higher accuracy.
\end{abstract}

\textbf{Keywords.} Kaczmarz method, relaxed greedy sampling, signal averaging, doubly-noisy linear systems

\section{Introduction}
Finding solutions to large-scale linear systems is one of the most fundamental problems in linear algebra and diverse engineering disciplines. 
Many practical problems can be cast in the form
\begin{equation}
\label{eq1.1}
Ax=b,
\end{equation}
where $A\in\mathbb{R}^{m\times n}$ and $b\in\mathbb{R}^{m}$ are given and $x\in\mathbb{R}^n$ is unknown. 
Throughout the paper, we assume that the system (\ref{eq1.1}) is consistent with the least-squares solution $\hat{x}=A^{\dagger}b$, which guarantees (\ref{eq1.1}) has at least one solution.
The Kaczmarz method \cite{karczmarz1937angenaherte} is a well-known iterative projection algorithm for solving the linear system (\ref{eq1.1}), owing to its low computational cost and geometric interpretation. To improve the convergence rate, \cite{strohmer2009randomized} proposed the randomized Kaczmarz (RK) method, which attains linear convergence in expectation by randomly selecting the rows of $A$.
The elegant convergence theory of RK has sparked extensive research interests, particularly in developing sampling techniques
\cite{zhang2019new,bai2018greedy,bai2018relaxed,zhang2022greedy,niu2020greedy,zhang2022weighted,tondji2021linear,yuan2022sparse,chen2023greedy,zhang2025quantile,yuan2022adaptively}.

However, the exact data $A$ and $b$ are generally unknown in practical applications. It is common that we are only given noisy measurements $\tilde{A}\in\mathbb{R}^{m\times n}$ and $\tilde{b}\in\mathbb{R}^{m}$. Thus, we turn our attention to consider the 
doubly-noisy linear system
\begin{equation}
\label{eq1.2}
\tilde{A}x\approx \tilde{b}.
\end{equation} 
Here we consider additive noise in the sense that
$\tilde{A}=A+E,~\tilde{b}=b+\varepsilon$, where $E\in\mathbb{R}^{m\times n}$ and $\varepsilon\in\mathbb{R}^{m}$ are the noise of $A$ and $b$, respectively. Note that the doubly-noisy system (\ref{eq1.2}) is not necessarily consistent.
How to reconstruct the solution to the doubly-noisy systems turns into a practical challenge.

Very recently, the authors of \cite{bergou2024note} extended RK to solve the doubly-noisy system (\ref{eq1.2}) with iterative scheme
\begin{equation}\label{eq1.3}
x_{k+1}=x_k-\frac{	\tilde{a}_{i_k}^Tx_k-\tilde{b}_{i_k}}{\|\tilde{a}_{i_k}\|^2_2}\tilde{a}_{i_k},\end{equation}
where $\tilde{a}_{i_k}$ represents the $i_k$-th row of the matrix $\tilde{A}\in\mathbb{R}^{m\times n}$ and $\tilde{b}_{i_k}$ is the $i_k$-th element of the vector $b\in\mathbb{R}^m$.
In the $k$-th iterate of RK, the current iterate $x_k$ is orthogonally projected onto the hyperplane $H_k=\{x\in\mathbb{R}^n\mid \langle \tilde{a}_{i_k},x\rangle=\tilde{b}_{i_k}\}$ sampled with probability 
\begin{equation}
\label{eq1.5}
P(i=i_k)=\frac{\|\tilde{a}_{i_k}\|^2}{\|\tilde{A}\|_F^2}.
\end{equation}
The convergence result of the scheme (\ref{eq1.3}) in \cite{bergou2024note} is given by
\begin{equation}\label{eq1.4}
\mathbb{E}\|x_{k+1}-\hat{x}\|^2
\leq 
\left(1-\frac{\sigma_{\min}^2(\tilde{A})}{\|\tilde{A}\|_F^2}
\right)
\mathbb{E}\|x_k-\hat{x}\|_2^2
+
\frac{\|E\hat{x}-\varepsilon\|_2^2}{\|\tilde{A}\|_F^2}.
\end{equation}
When extended to doubly-noisy systems, the RK method suffers from two limitations. First, its sampling method, as in \cite{strohmer2009randomized}, lacks geometric interpretation and does not utilize iterative information. Second, as presented in (\ref{eq1.4}), the convergence horizon $\frac{\|E\hat{x}-\varepsilon\|_2^2}{\|\tilde{A}\|_F^2}$ caused by additive noise $E$ and $\varepsilon$ prevents the iterates from converging to the exact solution $\hat{x}$.
These challenges motivate us to modify the RK method by incorporating a more effective sampling method and seeking to eliminate the convergence horizon in (\ref{eq1.4}).

To this end, we first extend the relaxed greedy randomized Kaczmarz (RGRK) method in \cite{bai2018relaxed} to doubly-noisy systems. The relaxed greedy sampling strategy combines the advantages of the greedy rule and randomized sampling method, aiming to achieve significant progress in each iteration. 
We show that RGRK is robust to doubly-noisy systems, and its linear convergence up to an error horizon is established by
\begin{equation}
\label{eq1.6}
\mathbb{E}\|	x_{k+1}-\hat{x}\|^2
\leq 
\left[1-\left(\frac{\theta}{\gamma}\|\tilde{A}\|_F^2+(1-\theta)\right)\cdot\frac{\tilde{\sigma}_{\min}^2(\tilde{A})}{\|\tilde{A}\|_F^2}\right]
\mathbb{E}\|x_k-\hat{x}\|_2^2
+
\frac{\|E\hat{x}-\varepsilon\|_2^2}{ (\|\tilde{A}\|_F-\frac{\gamma}{\|\tilde{A}\|_F})^2},
~k\in\mathbb{N}^{+}.
\end{equation}
Numerical experiments demonstrate that the relaxed greedy sampling method significantly outperforms the scheme in
(\ref{eq1.5}) in terms of both iteration numbers and computing time. However, the finite convergence horizon in (\ref{eq1.6}) adversely affects the numerical performance of RGRK.

To reduce the convergence horizon in (\ref{eq1.6}), we consider the case that the linear system can be measured repeatedly and the true solution remains unchanged during measurements. Following \cite{lyons1997understanding,Dual2023,hassan2010reducing,tondji2024adaptive}, we integrate the signal averaging technique into RGRK to decrease the data error. Specifically, instead of the exact data $A$ and $b$, we are only given multiple inexact measurements $A^1,\ldots,A^N$ and $b^1,\ldots,b^N$, which serve as unbiased estimators of $A$ and $b$, respectively. At $k$-th iterate, the single observation in (\ref{eq1.3}) is replaced by the averaged estimators  
$$
\bar{a}_{i_k}=\frac{1}{N}\sum_{j=1}^{N} a_{i_k}^j,~\bar{b}_{i_k}=\frac{1}{N}\sum_{j=1}^{N} b_{i_k}^j,
$$
which leads to the iterative scheme
\begin{equation}
\label{eq1.7}
x_{k+1}=x_k-\frac{	\bar{a}_{i_k}^Tx_k-\bar{b}_{i_k}}{\|\bar{a}_{i_k}\|^2_2}\bar{a}_{i_k}.
\end{equation}
Combining with the relaxed greedy sampling method, we propose the RGRK with signal averaging (RGRK-SA) method, for which we establish the following convergence result 
\begin{equation}
\label{eq1.8}
\begin{aligned}
&~~~~\mathbb{E}\|	x_{k+1}-\hat{x}\|^2\\
&\leq 
\left[1-\left(\frac{\theta}{\gamma}\|\tilde{A}\|_F^2+(1-\theta)\right)\cdot\frac{\tilde{\sigma}_{\min}^2(\tilde{A})}{\|\tilde{A}\|_F^2}\right]
\mathbb{E}\|x_k-\hat{x}\|_2^2
+
\frac{1}{N}\cdot\frac{\|E\hat{x}-\varepsilon\|_2^2}{ (\|\tilde{A}\|_F-\frac{\gamma}{\|\tilde{A}\|_F})^2},
~k\in\mathbb{N}^{+}.
\end{aligned}
\end{equation}
It follows from (\ref{eq1.8}) that the signal averaging technique reduces the convergence horizon in (\ref{eq1.6}) by a factor of $\frac{1}{N}$. Hence, as the number of measurements $N$ increases, RGRK-SA converges to the least-squares solution of the underlying system (\ref{eq1.1}). 
Numerical experiments on both simulated and real-world data demonstrate that RGRK-SA achieves higher accuracy than RK in \cite{bergou2024note}, which validates our theoretical findings.

\subsection{Contributions}
Our main contributions can be summarized as follows.
\begin{enumerate}[(i)]
\item 
We extend the relaxed greedy randomized Kaczmarz (RGRK) method to doubly-noisy linear systems and establish linear convergence up to a finite error horizon (see Theorem \ref{th1}).
\item
We propose RGRK with signal averaging (RGRK-SA), which incorporates repeated noisy measurements to reduce the convergence horizon of RGRK by a factor of $1/N$ (see Remark \ref{remark:3} (ii)). We further prove that RGRK-SA converges to the least-squares solution of the underlying system at a polynomial rate about the number of measurements (see Theorem \ref{th2}). 
\item 
Numerical experiments demonstrate that RGRK and RGRK-SA exhibit similar convergence speeds, both faster than the standard RK. While RK and RGRK achieve comparable recovery accuracy, RGRK-SA attains the highest solution accuracy, which improves as the number of measurements increases.
\end{enumerate}

\subsection{Organization}
The paper is organized as follows. Section \ref{sec6} presents related works about sampling methods and denoising methods. In section \ref{sec2}, we describe the RGRK method for doubly-noisy systems and then establish its convergence.
In Section \ref{sec3}, we propose the RGRK with signal averaging method to eliminate the convergence horizon caused by double noise.
In Section \ref{sec4}, we carry out some numerical experiments to verify the effectiveness of RGRK and RGRK-SA compared with RK. Finally, Section \ref{sec5} is our conclusion.

\section{The related work}
\label{sec6}
\subsection{The sampling methods}

The sampling method plays a crucial role in studying the convergence rate of RK.
Since the first randomized sampling method was introduced by \cite{strohmer2009randomized}, numerous randomized sampling strategies have been proposed to improve the convergence rate of RK, such as \cite{zhang2022weighted,Steinerberger2021,tondji2021linear,yuan2022adaptively}.
Greedy sampling methods, including the maximum residual \cite{Griebel2012} and maximum distance \cite{Du2019} are widely used. 
Combining the advantages of randomized rules and greedy rules, the sampling Kaczmarz-Motzkin (SKM) method \cite{yuan2022sparse,Zhang2023} and other variants have been developed \cite{miao2022greedy,niu2020greedy,bai2018greedy,bai2018relaxed,zhang2022weighted,Steinerberger2021,tondji2021linear,yuan2022adaptively}.
Another extension of RK is the block Kaczmarz algorithm, which utilizes minibatches to enable parallel computation at each iteration.
The projective block \cite{liu2021greedy,miao2022greedy,niu2020greedy,Needell2012,lorenz2025minimal} make projection onto a subsystem of (\ref{eq1.1}) instead of a single hyperplane, while the averaged block variants \cite{necoara2019faster,moorman2021randomized,zhang2025quantile} make use of distributed computing.

\subsection{The denoising techniques}
\label{sub2.2}
When only noisy measurements are available, i.e., $\tilde{b}=b+\varepsilon$ with noise $\varepsilon$, the RK method no longer converges to the solution but only to a neighborhood around the solution. This limitation has motivated extensive research to reduce the effect of noise on the convergence accuracy. 

The randomized extended Kaczmarz (REK) method was proposed in \cite{zouzias2013randomized}; it alternates projections onto row and column spaces of the measurement matrix and converges to the minimum $l_2$-norm least squares solution of an inconsistent linear system. 
Later, \cite{schopfer2022extended} further generalized REK by incorporating Bregman projections, thereby enabling the recovery of sparse least squares solutions to inconsistent systems. 
Although REK achieves linear convergence, it requires one additional computation and the storage of the entire system due to column-based projection steps, which is expensive for large-scale linear systems.
The randomized Kaczmarz with averaging (RKA) in \cite{moorman2021randomized} performs an average over multiple independent updates at each iteration, which can be seen as minibatch SGD. As the number of threads increases, the convergence horizon of RKA decreases for noisy systems.  
Recently, the tail-averaged randomized Kaczmarz
(TARK) method was proposed in \cite{epperly2024randomized}, which averages the RK interations in the tail part and achieves polynomial convergence for inconsistent systems. 
Very recently, \cite{marshall2023optimal} considered linear systems with independent mean zero noise $\varepsilon$ and showed that using an optimized relaxed stepszie can break through the convergence horizon induced by noise. 
These studies have concentrated on eliminating the convergence horizon caused by noise in the measurement vector $b$, while the convergence horizon of doubly-noisy systems remains to be addressed.

This paper focuses on eliminating the convergence horizon in doubly-noisy linear systems, in which only the noisy measurements $\tilde{A}$ and $\tilde{b}$ are available. Since neither REK nor the optimal relaxed stepsize extends naturally to doubly-noisy systems, we adopt the signal averaging technique, a standard noise reduction technique widely used in medical imaging \cite{baumann2019signal,eichner2015real,cochrane2012performance}, electrocardiography (EEG) \cite{jarrett1991signal}, and electroencephalography (ECG) \cite{jesus1988high}. Notably, the signal averaging technique has also been employed in the numerical experiments of \cite{tondji2024adaptive} to reduce the standard deviation of the noisy measurement vectors $\tilde{b}$. Therefore, we attempt to extend the signal averaging technique to solve doubly-noisy systems.

\subsection{Notation}

For the matrix $A=(a_{ij})\in\mathbb{R}^{m\times n}$, its Frobenius norm is denoted by $\|A\|_F=\sqrt{\sum_{i=1}^{m}\sum_{j=1}^{n} a_{ij}^2}$. Let $A_I$ be the submatrix consisting of the rows of $A$ indexed by the set $I$.
The smallest singular value of $A$ is denoted by $\sigma_{\min}(A)$.
Let $\tilde{\sigma}_{\min}(A)$ denote the smallest singular value among all row submatrices $A_I,~I\subset\{1,\ldots,m\}$ of $A$, that is,
$$\tilde{\sigma}_{\min}(A)=\{ \sigma_{\min}(A_I)\mid I\subset\{1,\ldots,m\},~A_I\neq 0\}.$$
We denote the rows of the noisy matrix $E\in\mathbb{R}^{m\times n}$ by $E_1,\ldots,E_n$ and the elements of the noisy vector $\varepsilon\in\mathbb{R}^m$ by $\varepsilon_1,\ldots,\varepsilon_m$.

For any vectors $x,y\in\mathbb{R}^{n}$, we use $\langle x,y\rangle$ to represent its inner product.
We indicate by $\mathbb{E}_k$ the conditional expectation conditioned on the first $k$ iterations in the sense that
$$\mathbb{E}_k[\cdot]=\mathbb{E}[\cdot\mid i_0,i_1,\ldots,i_{k-1}],$$
where $i_l~(l=0,1,\ldots,k-1)$ is the index of the sampled hyperplane at the $l$-th iterate. Unless otherwise stated, $\|\cdot\|$ denotes the Euclidean norm (2-norm).

\section{The RGRK method for doubly-noisy linear systems}
\label{sec2}

In this paper, we consider the doubly-nosiy systems
\begin{equation}
\label{eq3:2}
\tilde{A}x\approx \tilde{b}, ~\tilde{A}=A+E,~\tilde{b}=b+\varepsilon
\end{equation}
where $A\in\mathbb{R}^{m\times n}$ and $b\in\mathbb{R}^{m}$ represent the exact data.  
The relaxed greedy randomized Kaczmarz (RGRK) method was originally proposed in \cite{bai2018relaxed} for exact linear systems, where it recovers the solution  more efficiently than the standard randomized Kaczmarz by introducing an effective relaxed greedy probability criterion.
The relaxed greedy sampling method exploits the observation that larger residuals often indicate greater potential for progress, and it has been shown that RGRK converges significantly faster than RK when applied to exact system. Nevertheless, there is no research for the more practical settings where both the measurement matrix and the measurement vector are corrupted by noise.

Motivated by its performance on exact systems, we investigate how RGRK can be extended to handle the more challenging doubly-noisy systems.
To this end, the RGBK method applied to the system (\ref{eq1.2}) can be algorithmically described in Algorithm \ref{al1}, where $\theta\in [0,1]$ is a positive parameter and $M$ is the number of iterations.
It follows from
$$\max_{1\leq i\leq m}\left\{
\frac{\lvert \tilde{a}_{i}^Tx_k-\tilde{b}_{i}\rvert^2}{\|\tilde{a}_i\|_2^2}
\right\}
\geq
\sum_{i=1}^{m}\frac{\|\tilde{a}_{i_k}\|^2}{\|\tilde{A}\|_F^2}
\cdot 
\frac{\lvert \tilde{a}_{i}^Tx_k-\tilde{b}_{i}\rvert^2}{\|\tilde{a}_i\|_2^2}
=
 \frac{\|\tilde{A}x_k-\tilde{b}\|_2^2}{\|\tilde{A}\|_F^2}$$
that $\mu_k$ in (\ref{eq3.5}) is monotonically increasing about $\theta\in [0,1]$ and the candidate set $\mathcal{U}_k$ in (\ref{eq3.3}) is nonempty in each iteration. 
The main difference between RK in \cite{bergou2024note} and Algorithm \ref{al1} is the sampling rule. 
When $\theta=\frac{1}{2}$, the RGRK method reduces to the GRK method in \cite{bai2018greedy}. 

According to Theorem 2.1 in \cite{bai2018relaxed}, we obtain the following proposition concerning $w_k=\frac{\mu_k}{\|\tilde{A}x_k-\tilde{b}\|_2^2}$.
\begin{proposition}[Theorem 2.1 in \cite{bai2018relaxed}]
	\label{prop:1}
	Let $\{w_k\}_{k\in\mathbb{N}}$ be the sequence generated by Algorithm \ref{al1}. Then
	for $k\in\mathbb{N}^{+}$ we have
	\begin{eqnarray}
	\label{eq2:5}
	w_k\geq \frac{1}{\|\tilde{A}\|_F^2}\left(\frac{\theta}{\gamma}\|\tilde{A}\|_F^2+(1-\theta)\right),
	\end{eqnarray}
	where $\gamma=\max_{1\leq i\leq m} \sum_{j=1,j\neq i}^m \|\tilde{a}_{i}\|^2$; and for $k=0$ it holds that
	$$
	w_k\geq \frac{1}{\|\tilde{A}\|_F^2}.
	$$
\end{proposition}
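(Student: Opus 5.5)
The plan is to bound the greedy term of $w_k$ directly. I assume (as in \cite{bai2018relaxed}) that the relaxed greedy threshold in (\ref{eq3.5}) is
$$\mu_k=\theta\max_{1\le i\le m}\frac{|\tilde a_i^Tx_k-\tilde b_i|^2}{\|\tilde a_i\|_2^2}+(1-\theta)\frac{\|\tilde A x_k-\tilde b\|_2^2}{\|\tilde A\|_F^2}.$$
I also read $\gamma$ as $\max_{1\le i\le m}\sum_{j\ne i}\|\tilde a_j\|_2^2$, which is the standard definition; the displayed formula appears to have a typo in the index. Throughout I assume $r_k:=\tilde Ax_k-\tilde b\neq 0$, since otherwise the iteration has stopped and $w_k$ is undefined. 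Dividing by $\|r_k\|_2^2$ gives
$$w_k=\theta\,\frac{\max_i |r_k^{(i)}|^2/\|\tilde a_i\|_2^2}{\|r_k\|_2^2}+\frac{1-\theta}{\|\tilde A\|_F^2}.$$
So everything reduces to a lower bound on the ratio $\max_i (|r_k^{(i)}|^2/\|\tilde a_i\|^2)/\|r_k\|^2$.

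\emph{Case $k=0$.} I will use the weighted-average inequality already displayed before the proposition. The maximum of $|r_0^{(i)}|^2/\|\tilde a_i\|^2$ dominates its average under the weights $\|\tilde a_i\|^2/\|\tilde A\|_F^2$, and that average equals $\|r_0\|^2/\|\tilde A\|_F^2$. Substituting into the expression for $w_0$ gives
$$w_0\ge \frac{\theta}{\|\tilde A\|_F^2}+\frac{1-\theta}{\|\tilde A\|_F^2}=\frac{1}{\|\tilde A\|_F^2}.$$

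\emph{Case $k\ge1$.} The key observation is that $x_k$ is the orthogonal projection of $x_{k-1}$ onto the hyperplane $\{x:\tilde a_{i_{k-1}}^Tx=\tilde b_{i_{k-1}}\}$. This follows directly from the update (\ref{eq1.3}): computing $\tilde a_{i_{k-1}}^Tx_k-\tilde b_{i_{k-1}}$ yields $0$. Hence the $i_{k-1}$-th residual component vanishes, and
$$\|r_k\|_2^2=\sum_{i\ne i_{k-1}}\frac{|r_k^{(i)}|^2}{\|\tilde a_i\|_2^2}\,\|\tilde a_i\|_2^2\le \max_i\frac{|r_k^{(i)}|^2}{\|\tilde a_i\|_2^2}\sum_{i\ne i_{k-1}}\|\tilde a_i\|_2^2\le \gamma\,\max_i\frac{|r_k^{(i)}|^2}{\|\tilde a_i\|_2^2}.$$
Plugging this in gives $w_k\ge \theta/\gamma+(1-\theta)/\|\tilde A\|_F^2$. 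Factoring out $1/\|\tilde A\|_F^2$ produces exactly (\ref{eq2:5}).

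The only delicate point is confirming that the algorithm really performs an exact projection with the noisy row $\tilde a_{i_{k-1}}$ and noisy entry $\tilde b_{i_{k-1}}$, so that the zero-residual property holds for the noisy system $\tilde A x\approx\tilde b$. The noise is irrelevant here, because the argument of \cite{bai2018relaxed} uses only the data actually employed in the update. Apart from that, and the degenerate case $r_k=0$, the argument is a direct transcription of Theorem 2.1 in \cite{bai2018relaxed}.
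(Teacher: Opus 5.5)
Your proof is correct and follows the standard argument behind Theorem 2.1 of \cite{bai2018relaxed}, which the paper cites rather than reproves: the weighted-average bound gives the case $k=0$, and for $k\ge1$ the vanishing of the $i_{k-1}$-th residual component gives $\max_i |r_k^{(i)}|^2/\|\tilde a_i\|_2^2 \ge \|r_k\|_2^2/\gamma$. Your reading of $\gamma$ as $\max_i\sum_{j\ne i}\|\tilde a_j\|_2^2$ is the intended one, since it is what makes $(\|\tilde A\|_F-\gamma/\|\tilde A\|_F)^2=(\min_i\|\tilde a_i\|_2^2)^2/\|\tilde A\|_F^2$ in the proof of Theorem~\ref{th1}; your bound also implies the statement under the literal formula $(m-1)\max_i\|\tilde a_i\|_2^2$, because that value is at least as large and so yields a weaker lower bound.
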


\begin{algorithm}[H]
	\caption{The RGRK method for doubly-noisy linear systems}
	\label{al1}
	\begin{algorithmic}[1]
		\State \textbf{Input}: Given $\tilde{A} \in \mathbb{R}^{m \times n}$, $\tilde{b} \in \mathbb{R}^{m}$, $x_{0}=0\in \mathbb{R}^{n}$, $\theta\in [0,1]$ and $M$
		\State \textbf{Ouput}: $x_M$
		\State \textbf{for} $k=0,1,2, \ldots,M$
		\State $\quad$$\quad$compute \begin{equation}
		\label{eq3.5}
		\mu_k=
	\theta\max_{1\leq i\leq m}\left\{
		\frac{\lvert \tilde{a}_{i}^Tx_k-\tilde{b}_{i}\rvert^2}{\|\tilde{a}_i\|_2^2}
		\right\}
		+(1-\theta)\frac{\|\tilde{A}x_k-\tilde{b}\|_2^2}{\|\tilde{A}\|_F^2}
		\end{equation}
		\State $\quad$$\quad$determine the index set of positive integers
		\begin{equation}
		\label{eq3.3}
		\mathcal{U}_k=\left\{i
		\mid 
		\frac{\lvert \tilde{a}_{i}^Tx_k-\tilde{b}_{i}\rvert^2}{\|\tilde{a}_i\|_2^2}
	 \geq \mu_k
		\right\}
		\end{equation}
		\State $\quad$$\quad$compute the $i$-th entry $\tilde{r}_k^{i}$ of the vector $\tilde{r}_k$ according to
		$$
		\tilde{r}_k^{i}=
		\left\{
		\begin{aligned}
	\tilde{a}_{i}^Tx_k-\tilde{b}_{i},&\text{if}~i\in \mathcal{U}_k\\
		0,& \text{otherwise}\\
		\end{aligned}
		\right.
		$$
		\State$\quad$$\quad$select $i_k\in\mathcal{U}_k$ with probability $P(i=i_k)$=$\frac{\lvert \tilde{r}_k^{i}\rvert^2}{\|\tilde{r}_k\|^2_2}$
		\State$\quad$$\quad$set $x_{k+1}=x_k-\frac{	\tilde{a}_{i_k}^Tx_k-\tilde{b}_{i_k}}{\|\tilde{a}_{i_k}\|^2_2}\tilde{a}_{i_k}$
		\State \textbf{end for}
	\end{algorithmic}
\end{algorithm}

We next establish convergence results for Algorithm \ref{al1} on doubly-noisy linear systems. The analysis shows that the iterates are within a fixed distance from the solution, where the bound depends on the norms of the noise and the solution.

\begin{theorem}
\label{th1}
Assume that the underlying system $Ax=b$ has a solution $\hat{x}=A^{\dagger}b$.
Let $\{x_k\}_{k\in\mathbb{N}}$ be generated by Algorithm \ref{al1} applied to the doubly-noisy system $\tilde{A}x\approx\tilde{b}$ with 
$\tilde{A}=A+E$ and $\tilde{b}=b+\varepsilon$.
 Then starting from any initial point $x_0\in\mathbb{R}^n$ it holds that
 $$
 \mathbb{E}\|	x_1-\hat{x}\|^2
 \leq 
 \left(1-\frac{\tilde{\sigma}_{\min}^2(\tilde{A})}{\|\tilde{A}\|_F^2}\right)
 \mathbb{E}\|x_0-\hat{x}\|_2^2
 +
 \frac{\|E\hat{x}-\varepsilon\|_2^2}{ (\|\tilde{A}\|_F-\frac{\gamma}{\|\tilde{A}\|_F})^2},
 $$ 
 and
\begin{equation*}
\mathbb{E}\|	x_{k+1}-\hat{x}\|^2
\leq 
\left[1-\left(\frac{\theta}{\gamma}\|\tilde{A}\|_F^2+(1-\theta)\right)\cdot\frac{\tilde{\sigma}_{\min}^2(\tilde{A})}{\|\tilde{A}\|_F^2}\right]
\mathbb{E}\|x_k-\hat{x}\|_2^2
+
\frac{\|E\hat{x}-\varepsilon\|_2^2}{ (\|\tilde{A}\|_F-\frac{\gamma}{\|\tilde{A}\|_F})^2},
~k\in\mathbb{N}^{+},
\end{equation*}
where $\gamma=\max_{1\leq i\leq m} \sum_{j=1,j\neq i}^m \|\tilde{a}_{i}\|^2$ and the relaxed parameter $\theta\in [0,1]$.
\end{theorem}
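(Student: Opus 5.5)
We follow the standard one-step analysis for noisy Kaczmarz, as in \cite{bergou2024note}, but with the relaxed greedy probabilities of Algorithm \ref{al1}. The idea is to split the residual at $\hat{x}$ into a ``consistent'' part and a noise part. Since $A\hat{x}=b$, we have
\[
\tilde{a}_i^T\hat{x}-\tilde{b}_i=(E\hat{x}-\varepsilon)_i=:\eta_i .
\]
Define $y_{k+1}$ as the orthogonal projection of $x_k$ onto the hyperplane $\{x:\tilde{a}_{i_k}^Tx=\tilde{a}_{i_k}^T\hat{x}\}$, which contains $\hat{x}$. Then
\[
x_{k+1}=y_{k+1}+\frac{\eta_{i_k}}{\|\tilde{a}_{i_k}\|^2}\,\tilde{a}_{i_k}.
\]
The displacement vector is parallel to $\tilde{a}_{i_k}$, while $y_{k+1}-\hat{x}$ is orthogonal to $\tilde{a}_{i_k}$. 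Pythagoras therefore gives the exact identity
\[
\|x_{k+1}-\hat{x}\|^2=\|x_k-\hat{x}\|^2-\frac{(\tilde{a}_{i_k}^T(x_k-\hat{x}))^2}{\|\tilde{a}_{i_k}\|^2}+\frac{\eta_{i_k}^2}{\|\tilde{a}_{i_k}\|^2}.
\]

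\textbf{Contraction term.} We take the conditional expectation $\mathbb{E}_k$ with respect to the greedy probabilities $|\tilde r_k^i|^2/\|\tilde r_k\|^2$ on $\mathcal{U}_k$. For every $i\in\mathcal{U}_k$ we have $|\tilde{a}_i^Tx_k-\tilde{b}_i|^2/\|\tilde{a}_i\|^2\ge\mu_k=w_k\|\tilde{A}x_k-\tilde{b}\|^2$, by definition of $\mathcal{U}_k$. So the expected decrease is driven by $w_k$ times a residual norm. The intended step is to compare the decrease term with $\|\tilde{A}_{\mathcal{U}_k}(x_k-\hat{x})\|^2$ over the selected rows, lower bound it by $\tilde{\sigma}_{\min}^2(\tilde{A})\|x_k-\hat{x}\|^2$ using the definition of $\tilde\sigma_{\min}$ over row submatrices, and then apply Proposition \ref{prop:1}. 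For $k\ge1$ this gives the factor $\bigl(\frac{\theta}{\gamma}\|\tilde{A}\|_F^2+1-\theta\bigr)/\|\tilde{A}\|_F^2$; for $k=0$ it gives $1/\|\tilde{A}\|_F^2$.

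\textbf{Noise term.} We need $\mathbb{E}_k[\eta_{i_k}^2/\|\tilde{a}_{i_k}\|^2]\le\|\eta\|^2/(\|\tilde{A}\|_F-\gamma/\|\tilde{A}\|_F)^2$. The plan is to bound each selection probability by $|\tilde r_k^i|^2/\|\tilde r_k\|^2$. A lower bound on $\|\tilde r_k\|^2$ comes from $\|\tilde r_k\|^2\ge\mu_k\sum_{i\in\mathcal{U}_k}\|\tilde a_i\|^2$ together with $w_k$, producing denominators involving $\|\tilde{A}\|_F^2$ and $\gamma$. The specific form $(\|\tilde{A}\|_F-\gamma/\|\tilde{A}\|_F)^2=(\|\tilde{A}\|_F^2-\gamma)^2/\|\tilde{A}\|_F^2$ suggests a Cauchy--Schwarz step in which $\|\tilde{A}\|_F^2-\gamma=\min_i\|\tilde a_i\|^2$-type quantities appear. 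So I would try to bound $\max_i 1/\|\tilde a_i\|^2$ by $1/(\|\tilde{A}\|_F^2-\gamma)$ and the weighted residual ratio by $\|\tilde{A}\|_F^2/(\|\tilde{A}\|_F^2-\gamma)$.

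\textbf{Difficulty and final step.} The main obstacle is the contraction term. The numerator $(\tilde a_i^T(x_k-\hat x))^2$ is the residual of the consistent part, while the sampling weights use the noisy residual $\tilde a_i^Tx_k-\tilde b_i$, which differs by $\eta_i$. I would first try the splitting $\tilde a_i^Tx_k-\tilde b_i=\tilde a_i^T(x_k-\hat x)+\eta_i$, absorbing the cross terms into the noise horizon, possibly at the cost of the constants. If that fails, I would accept the paper's implicit convention of evaluating the greedy bound on the consistent residual. Taking full expectation and combining the two estimates then yields both displayed inequalities.
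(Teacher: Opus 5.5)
Your skeleton is the paper's: the exact one-step identity, Proposition~\ref{prop:1} for $w_k$, and the noise estimate. The noise part works as you describe, and it needs no Cauchy--Schwarz, only termwise bounds: $\|\tilde r_k\|^2\ge\mu_k\sum_{i\in\mathcal U_k}\|\tilde a_i\|^2\ge\mu_k(\|\tilde A\|_F^2-\gamma)$, $|\tilde r_k^i|^2\le\|\tilde Ax_k-\tilde b\|^2$, $\mu_k\ge\|\tilde Ax_k-\tilde b\|^2/\|\tilde A\|_F^2$ and $\|\tilde a_i\|^2\ge\|\tilde A\|_F^2-\gamma$.

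The genuine gap is the contraction term. You never say how to get from the expected decrease to $w_k\|\tilde A_{\mathcal U_k}(x_k-\hat x)\|^2$, and neither fallback proves the stated inequality. Splitting $\tilde a_i^Tx_k-\tilde b_i=\tilde a_i^T(x_k-\hat x)+\eta_i$ inside the weights creates cross terms. At best this gives a variant with a degraded rate and a larger horizon, not the displayed constants. ``Evaluating the greedy bound on the consistent residual'' is not something the paper does. It would amount to analysing a different, non-implementable sampling rule whose weights depend on $\hat x$, not Algorithm~\ref{al1}.

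The missing idea is to apply the threshold defining $\mathcal U_k$ only to the sampling weight, never to the numerator. For $i\in\mathcal U_k$ we have $|\tilde r_k^i|^2\ge\mu_k\|\tilde a_i\|^2$, so
\[
\mathbb E_k\frac{(\tilde a_{i_k}^T(x_k-\hat x))^2}{\|\tilde a_{i_k}\|^2}
=\sum_{i\in\mathcal U_k}\frac{|\tilde r_k^i|^2}{\|\tilde r_k\|^2}\cdot\frac{(\tilde a_i^T(x_k-\hat x))^2}{\|\tilde a_i\|^2}
\ge\frac{\mu_k}{\|\tilde r_k\|^2}\,\|\tilde A_{\mathcal U_k}(x_k-\hat x)\|^2
\ge w_k\,\|\tilde A_{\mathcal U_k}(x_k-\hat x)\|^2 .
\]
The last step uses $\|\tilde r_k\|^2\le\|\tilde Ax_k-\tilde b\|^2$ and $\mu_k=w_k\|\tilde Ax_k-\tilde b\|^2$. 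The factor $\|\tilde a_i\|^2$ cancels and the noisy residuals drop out entirely. So the mismatch you worried about never arises, and this is exactly the paper's route.

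Be aware that your next step, which the paper also takes, is fragile: $\|\tilde A_{\mathcal U_k}(x_k-\hat x)\|^2\ge\tilde\sigma_{\min}^2(\tilde A)\|x_k-\hat x\|^2$ requires $\tilde A_{\mathcal U_k}$ to have full column rank. When $|\mathcal U_k|<n$, which is typical for $\theta$ near $1$ where $\mathcal U_k$ is often a single row, $x_k-\hat x$ can lie in $\ker\tilde A_{\mathcal U_k}$. The bound then holds only if $\sigma_{\min}$ means $\sigma_n$, which makes $\tilde\sigma_{\min}(\tilde A)=0$. In the consistent case this is avoided because $\frac{\mu_k}{\|\tilde r_k\|^2}\|A_{\mathcal U_k}(x_k-\hat x)\|^2=\mu_k$, so one can pass to the full residual. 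With noise that cancellation is lost.
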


\begin{proof}
Starting with the update in Algorithm \ref{al1}, we have
$$
\begin{aligned}
x_{k+1}-\hat{x}
&=x_k-\hat{x}-\frac{	\tilde{a}_{i_k}^Tx_k-\tilde{b}_{i_k}}{\|\tilde{a}_{i_k}\|^2_2}\tilde{a}_{i_k}\\
&=x_k-\hat{x}-\frac{	\tilde{a}_{i_k}^T(x_k-\hat{x})+E_{i_k}\hat{x}-\varepsilon_{i_k}}{\|\tilde{a}_{i_k}\|^2_2}\tilde{a}_{i_k}.
\end{aligned}
$$	
Hence, we obtain
\begin{equation}
\begin{aligned}
\label{eq3:1}
\|	x_{k+1}-\hat{x}\|^2
=\|x_k-\hat{x}\|^2-\frac{(\tilde{a}_{i_k}^T(x_k-\hat{x}))^2}{\|\tilde{a}_{i_k}\|^2_2}
+\frac{(E_{i_k}\hat{x}-\varepsilon_{i_k})^2}{\|\tilde{a}_{i_k}\|^2_2}.
\end{aligned}
\end{equation}
We fix the values of the indices $i_0,\cdots,i_{k-1}$ and consider only $i_k$ as a random variable. Taking the expectation on (\ref{eq3:1}) conditional to the choices of $i_0,\cdots,i_{k-1}$ obtains
\begin{align}
\label{eq2:2}
\mathbb{E}_k\|	x_{k+1}-\hat{x}\|^2
&=\|x_k-\hat{x}\|^2-\mathbb{E}_k\frac{(\tilde{a}_{i_k}^T(x_k-\hat{x}))^2}{\|\tilde{a}_{i_k}\|^2_2}
+\mathbb{E}_k\frac{(E_{i_k}\hat{x}-\varepsilon_{i_k})^2}{\|\tilde{a}_{i_k}\|^2_2}.
\end{align}

For the second term in (\ref{eq2:2}), we have
\begin{equation}
\begin{aligned}
\label{eq2:3}
\mathbb{E}_k\frac{(\tilde{a}_{i_k}^T(x_k-\hat{x}))^2}{\|\tilde{a}_{i_k}\|^2_2}
&=\sum_{i\in\mathcal{U}_k} \frac{(\tilde{a}_{i}^Tx_k-\tilde{b}_{i})^2}{\sum_{i\in\mathcal{U}_k}(\tilde{a}_{i}^Tx_k-\tilde{b}_{i})^2}\cdot\frac{(\tilde{a}_{i}^T(x_k-\hat{x}))^2}{\|\tilde{a}_{i}\|_2^2}\\
&\geq
w_k \|\tilde{A}x_k-\tilde{b}\|_2^2\cdot
 \frac{\sum_{i\in\mathcal{U}_k}(\tilde{a}_{i}^T(x_k-\hat{x}))^2}{\sum_{i\in\mathcal{U}_k}(\tilde{a}_{i}^Tx_k-\tilde{b}_{i})^2}
\\
&=  
w_k
\|\tilde{A}_{\mathcal{U}_k}(x_k-\hat{x})\|_2^2 
\cdot
\frac{\|\tilde{A}x_k-\tilde{b}\|_2^2}{\|\tilde{A}_{\mathcal{U}_k}x_k-\tilde{b}_{\mathcal{U}_k}\|_2^2}\\
&\geq w_k \tilde{\sigma}_{\min}^2(\tilde{A})\|x_k-\hat{x}\|_2^2.
\end{aligned}
\end{equation}
Here the first inequality follows from the fact that for any $i\in \mathcal{U}_k$ it holds that
\begin{equation}
\label{eq2:4}
 (\tilde{a}_{i}^Tx_k-\tilde{b}_{i})^2 \geq \mu_k \|\tilde{a}_{i}\|_2^2 =w_k \|\tilde{A}x_k-\tilde{b}\|_2^2
 \|\tilde{a}_{i}\|_2^2.
\end{equation}
The last inequality uses $\sigma_{\min}^2(\tilde{A}_{\mathcal{U}_k})\geq \tilde{\sigma}_{\min}^2(\tilde{A})$ together with $$\|\tilde{A}x_k-\tilde{b}\|_2^2\geq \|\tilde{A}_{\mathcal{U}_k}x_k-\tilde{b}_{\mathcal{U}_k}\|_2^2.$$ 
Combining (\ref{eq2:5}) and (\ref{eq2:3}), we deduce that, for $k\in\mathbb{N}^{+}$,
\begin{equation}
\label{eq2:6}
\mathbb{E}_k\frac{(\tilde{a}_{i_k}^T(x_k-\hat{x}))^2}{\|\tilde{a}_{i_k}\|^2_2}\geq 
\left(\frac{\theta}{\gamma}\|\tilde{A}\|_F^2+(1-\theta)\right)\cdot\frac{\tilde{\sigma}_{\min}^2(\tilde{A})}{\|\tilde{A}\|_F^2}\|x_k-\hat{x}\|_2^2,
\end{equation}
and for $k=0$ we have
\begin{equation}
\label{eq2.1}
\mathbb{E}_k\frac{(\tilde{a}_{i_k}^T(x_k-\hat{x}))^2}{\|\tilde{a}_{i_k}\|^2_2}\geq 
\frac{\tilde{\sigma}_{\min}^2(\tilde{A})}{\|\tilde{A}\|_F^2}\|x_k-\hat{x}\|_2^2.
\end{equation}

For the third term in (\ref{eq2:2}), we have
\begin{equation}
\begin{aligned}
\label{eq2:7}
\mathbb{E}_k\frac{(E_{i_k}\hat{x}-\varepsilon_{i_k})^2}{\|\tilde{a}_{i_k}\|^2_2}
&
=\sum_{i\in\mathcal{U}_k} \frac{(\tilde{a}_{i}^Tx_k-\tilde{b}_{i})^2}{\sum_{i\in\mathcal{U}_k}(\tilde{a}_{i}^Tx_k-\tilde{b}_{i})^2}\cdot\frac{(E_i\hat{x}-\varepsilon_{i_k})^2}{\|\tilde{a}_{i}\|^2_2}.
\end{aligned}
\end{equation}
It follows from (\ref{eq2:4}) that
we have 
\begin{equation}
\begin{aligned}
\label{eq2:8}
\sum_{i\in\mathcal{U}_k}(\tilde{a}_{i}^Tx_k-\tilde{b}_{i})^2\geq
\mu_k \sum_{i\in\mathcal{U}_k}\|\tilde{a}_{i}\|_2^2\geq \mu_k \min_{1\leq i\leq m}\|\tilde{a}_{i}\|_2^2.
\end{aligned}
\end{equation}
Inserting (\ref{eq2:8}) into (\ref{eq2:7}), it holds that
\begin{equation}
\begin{aligned}
\label{eq2:9}
\mathbb{E}_k\frac{(E_{i_k}\hat{x}-\varepsilon_{i_k})^2}{\|\tilde{a}_{i_k}\|^2_2}
&\leq 
\frac{1}{\mu_k \min_{1\leq i\leq m}\|\tilde{a}_{i}\|^2_2}\sum_{i\in\mathcal{U}_k} \frac{(\tilde{a}_{i}^Tx_k-\tilde{b}_{i})^2}{\|\tilde{a}_{i}\|^2_2}\cdot (E_{i}\hat{x}-\varepsilon_{i})^2\\
&\leq 
\frac{1}{\mu_k }\cdot \frac{\|\tilde{A}x_k-\tilde{b}\|_2^2}{(\min_{1\leq i\leq m}\|\tilde{a}_{i}\|^2_2)^2}\cdot
\|E_{\mathcal{U}_k}\hat{x}-\varepsilon_{\mathcal{U}_k}\|_2^2\\
&\leq 
\frac{\|E\hat{x}-\varepsilon\|_2^2}{ (\|\tilde{A}\|_F-\frac{\gamma}{\|\tilde{A}\|_F})^2},
\end{aligned}
\end{equation}
where the last inequality follows from $\mu_k \geq \frac{\|\tilde{A}x_k-\tilde{b}\|_2^2}{\|\tilde{A}\|_F^2}$ and $\|E_{\mathcal{U}_k}\hat{x}-\varepsilon_{\mathcal{U}_k}\|_2^2\leq \|E\hat{x}-\varepsilon\|_2^2$.

Finally, combining (\ref{eq2:2}), (\ref{eq2:6}), (\ref{eq2.1}), and (\ref{eq2:9}), taking the full expectation on both sides yields
$$
\mathbb{E}\|	x_1-\hat{x}\|^2
\leq 
\left(1-\frac{\tilde{\sigma}_{\min}^2(\tilde{A})}{\|\tilde{A}\|_F^2}\right)
\mathbb{E}\|x_0-\hat{x}\|_2^2
+
\frac{\|E\hat{x}-\varepsilon\|_2^2}{ (\|\tilde{A}\|_F-\frac{\gamma}{\|\tilde{A}\|_F})^2},
$$ 
and
\begin{equation*}
\mathbb{E}\|	x_{k+1}-\hat{x}\|^2
\leq 
\left[1-\left(\frac{\theta}{\gamma}\|\tilde{A}\|_F^2+(1-\theta)\right)\cdot\frac{\tilde{\sigma}_{\min}^2(\tilde{A})}{\|\tilde{A}\|_F^2}\right]
\mathbb{E}\|x_k-\hat{x}\|_2^2
+
\frac{\|E\hat{x}-\varepsilon\|_2^2}{ (\|\tilde{A}\|_F-\frac{\gamma}{\|\tilde{A}\|_F})^2},
~k\in\mathbb{N}^{+}.
\end{equation*}
The proof is completed.
\end{proof}

\begin{remark}
	\label{remark:1}
	\begin{enumerate}[(i)]
		\item In view of $\|\tilde{A}\|_F^2>\gamma$, an upper bound of convergence factor of RGRK is monotonically decreasing with respect to $\theta\in [0,1]$. When $\theta=1$, the relaxed greedy sampling method reduces to the greedy sampling method, also known as maximal correction method in \cite{feichtinger1992new}.
		\item As $\frac{\theta}{\gamma}\|\tilde{A}\|_F^2+(1-\theta)>1$, it holds that$$1-\left(\frac{\theta}{\gamma}\|\tilde{A}\|_F^2+(1-\theta)\right)\cdot\frac{\tilde{\sigma}_{\min}^2(\tilde{A})}{\|\tilde{A}\|_F^2}<1-\frac{\tilde{\sigma}_{\min}^2(\tilde{A})}{\|\tilde{A}\|_F^2}.$$
		The convergence horizon $\frac{\|E\hat{x}-\varepsilon\|_2^2}{\|\tilde{A}\|_F^2}$ of Theorem 3.1 in \cite{bergou2024note} is changed to 
		$\frac{\|E\hat{x}-\varepsilon\|_2^2}{ (\|\tilde{A}\|_F-\frac{\gamma}{\|\tilde{A}\|_F})^2}$, which remains to be solved in the next section.
		\item When there is no noise in both coefficient matrix and measurement vector, Theorem \ref{th2} reduces to
		$$\mathbb{E}\|	x_{k+1}-\hat{x}\|^2
		\leq 
		\left[1-\left(\frac{\theta}{\gamma}\|A\|_F^2+(1-\theta)\right)\cdot\frac{\sigma_{\min}^2(A)}{\|A\|_F^2}\right]
		\mathbb{E}\|x_k-\hat{x}\|_2^2,
		~k\in\mathbb{N}^{+},$$
		which coincides with Theorem 2.1 in \cite{bai2018relaxed}. 
	\end{enumerate}
\end{remark}
	
\subsection{Multiplicative Noise}

We have analyzed the additive doubly-noisy systems, and now extend to the more general multiplicative doubly-noisy case. Specifically, we consider the following linear system
\begin{equation}
\label{eq4:1}
(I_m+E)A(I_n+F)x\approx b+\varepsilon,
\end{equation}
where $I_m+E$ and $I_n+F$ are nonsingular, and $A\in\mathbb{R}^{m\times n}$ and $b\in\mathbb{R}^{m}$ represent the exact data. Let $\triangle A=EA+AF+EAF$, then we have $$\tilde{A}=(I_m+E)A(I_n+F)=A+\triangle A,~\tilde{b}=b+\varepsilon.$$
Applying the analysis of Theorem \ref{th1} to the multiplicative doubly-noisy system (\ref{eq4:1}), we obtain the following convergence result.

\begin{lemma}
	\label{th3}
	Assume that the underlying system $Ax=b$ has a solution $\hat{x}=A^{\dagger}b$.
	Let $\{x_k\}_{k\in\mathbb{N}}$ be generated by Algorithm \ref{al1} applied to the doubly-noisy system $\tilde{A}x\approx\tilde{b}$ with 
nonsingular $\tilde{A}=(I_m+E)A(I_n+F)=A+\triangle A$ and $\tilde{b}=b+\varepsilon$.
	Then starting from any initial point $x_0\in\mathbb{R}^n$ it holds that
	\begin{equation*}
	\mathbb{E}\|x_{k+1}-\hat{x}\|^2
	\leq 
	\left[1-\left(\frac{\theta}{\gamma}\|\tilde{A}\|_F^2+(1-\theta)\right)\cdot\frac{\tilde{\sigma}_{\min}^2(\tilde{A})}{\|\tilde{A}\|_F^2}\right]
	\mathbb{E}\|x_k-\hat{x}\|_2^2
	+
	\frac{\|\triangle A\hat{x}-\varepsilon\|_2^2}{ (\|\tilde{A}\|_F-\frac{\gamma}{\|\tilde{A}\|_F})^2},
	~k\in\mathbb{N}^{+},
	\end{equation*}
	where $\gamma=\max_{1\leq i\leq m} \sum_{j=1,j\neq i}^m \|\tilde{a}_{i}\|^2$ and the relaxed parameter $\theta\in [0,1]$.
\end{lemma}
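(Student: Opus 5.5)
The plan is to reduce Lemma \ref{th3} to Theorem \ref{th1} by recognizing that the multiplicative model is simply an additive model with a structured perturbation. Setting $\triangle A=EA+AF+EAF$, expanding $(I_m+E)A(I_n+F)=A+EA+AF+EAF$ gives $\tilde{A}=A+\triangle A$ and $\tilde{b}=b+\varepsilon$. Moreover, the proof of Theorem \ref{th1} never uses any structural property of $E$; it only uses the identities $\tilde{a}_i=a_i+E_i$ (rowwise), $\tilde b_i=b_i+\varepsilon_i$, and $a_i^T\hat x=b_i$. We therefore rerun that argument verbatim with $E$ replaced by $\triangle A$, or, more formally, invoke Theorem \ref{th1} with additive matrix noise $\triangle A$.

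The key steps are as follows. First, verify that $A\hat{x}=b$, which holds since the underlying system is assumed consistent with $\hat{x}=A^{\dagger}b$. From this we get the residual identity
$$\tilde{a}_i^T x_k-\tilde{b}_i=\tilde{a}_i^T(x_k-\hat{x})+(\triangle A)_i\hat{x}-\varepsilon_i,$$
where $(\triangle A)_i$ denotes the $i$-th row of $\triangle A$. Second, obtain the analogue of (\ref{eq3:1}). This needs the cross term to vanish: the update is a projection onto the hyperplane $\{x:\tilde a_{i_k}^Tx=\tilde b_{i_k}\}$, and we must check that $x_{k+1}-\hat x$ splits orthogonally. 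Here I would track the cross term exactly as the paper does. Third, bound the contraction term via (\ref{eq2:3}) and Proposition \ref{prop:1}. This involves only $\tilde{A}$, $\tilde b$ and the sampling rule, so it is unchanged. Fourth, bound the noise term as in (\ref{eq2:7})--(\ref{eq2:9}), with $E\hat x-\varepsilon$ replaced by $\triangle A\hat{x}-\varepsilon$. Finally, take full expectations for $k\in\mathbb{N}^+$.

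Nonsingularity of $I_m+E$, $I_n+F$ (and of $\tilde A$) is not really needed for the inequality itself. Its role is to ensure that the rows $\tilde a_i$ are nonzero, so that the projections and $\gamma$ are well defined, and that $\tilde{\sigma}_{\min}(\tilde A)>0$, so that the contraction factor is strictly less than one. I would note this in a remark rather than use it in the proof.

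The main obstacle I expect is the passage from the per-step identity to the horizon bound in (\ref{eq2:9}). That step uses $\mu_k\ge \|\tilde A x_k-\tilde b\|^2/\|\tilde A\|_F^2$ together with a lower bound on $\min_i\|\tilde a_i\|^2$ expressed through $\|\tilde A\|_F^2-\gamma$. Since this step concerns only $\tilde A$ and not the noise decomposition, it transfers directly. I would therefore cite it from the proof of Theorem \ref{th1} rather than rederive it, so the lemma follows with $E$ replaced by $\triangle A$ throughout.
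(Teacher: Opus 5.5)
Your proposal is correct and follows the paper's route: the paper obtains the lemma by rerunning the analysis of Theorem~\ref{th1} with the additive perturbation $E$ replaced by $\triangle A=EA+AF+EAF$, which works because that analysis uses only $A\hat{x}=b$ and the rowwise splittings $\tilde a_i=a_i+(\triangle A)_i$, $\tilde b_i=b_i+\varepsilon_i$. You are also right that the nonsingularity hypothesis is never used, though your guess at its role is slightly off: full column rank of a rectangular $\tilde A$ does not exclude zero rows, and $\tilde{\sigma}_{\min}(\tilde A)$ is a minimum over row submatrices, which nonsingularity of $\tilde A$ does not control.
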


\section{The RGRK method with signal averaging}
\label{sec3}

The convergence horizon in Theorem \ref{th1} inevitably limits the performance of the RGRK method on doubly-noisy systems (\ref{eq3:2}). A natural question is how to obtain a more accurate approximated solution to doubly-noisy systems. As discussed in Section \ref{sub2.2}, the signal averaging technique can reduce the noise and enhance the true signal by using the average of multiple measurements.
To the best of our knowledge, the signal averaging technique has not been studied for RK or its variants in the setting of doubly-noisy systems.

Assume that the measurement data $\tilde{A}$ and $\tilde{b}$ can be measured multiple times repeatedly, while the solution to the underlying system $Ax=b$ does not change during measurements.
In detail, we are only given unbiased, independent, identically distributed measurements $A^1,\ldots,A^N$ and $b^1,\ldots,b^N$. 
Let the $i$-th row of $A^j$ be $a_i^j$ and the $i$-th element of $b^j$ be $b_i^j$ for any $i=1,\ldots,m,j=1,\ldots,N$.
According to $A^j=A+E^j$ and $b^j=b+\varepsilon^j$ for any $j=1,\ldots,N$, we have $a_i^j=a_i+E_i^j$ and $b_i^j=b_i+\varepsilon_i^j$.
To apply the signal averaging technique, we make the following assumptions.
\begin{assumption}
	\label{assum1}
\begin{enumerate}[(i)]
	\item The exact system $Ax=b$ is consistent with $\hat{x}=A^{\dagger}b$.
\item Each row of $A^j$ is normalized, i.e., $\|a_i^j\|_2=1$ for all $i=1,\ldots,m,j=1,\ldots,N$.
\item The measurement noise $E_i^j$ and $\varepsilon_i^j$ satisfy
$$\mathbb{E}E_i^j=0,~\mathbb{E}\|E_i^j\|^2=\sigma_{E,i}^2,~\mathbb{E}\varepsilon_i^j=0,~\mathbb{E}(\varepsilon_i^j)^2=\sigma_{\varepsilon,i}^2,$$
for all $i=1,\ldots,m$ and $j=1,\ldots,N$. 
\end{enumerate}
\end{assumption} 

For all $j=1,\ldots,N$, denote the total noise level of $E^j$ and $\varepsilon^j$ by 
$$\sum_{i=1}^{m} \sigma_{E,i}^2= \sigma_{E}^2,~\sum_{i=1}^{m} \sigma_{\varepsilon,i}^2=\sigma_{\varepsilon}^2.$$
In the $k$-th iterate, we sample $i_k$-th hyperplane and utilize the averages
\begin{equation}
\label{eq3.6}
\bar{a}_{i_k}=\frac{1}{N}\sum_{j=1}^{N} a_{i_k}^j,~\bar{b}_{i_k}=\frac{1}{N}\sum_{j=1}^{N} b_{i_k}^j
\end{equation}
as unbasied estimators of $a_{i_k}$ and $b_{i_k}$, instead of $\tilde{a}_{i_k}$ and $\tilde{b}_{i_k}$.
It follows from (\ref{eq3.6}) that we have
\begin{equation}
\label{eq3.1}
\mathbb{E} \|\bar{a}_{i_k}-a_{i_k}\|^2
=\mathbb{E} \|\bar{E}_{i_k}\|^2
=\mathbb{E} \left\|\frac{1}{N}\sum_{j=1}^{N} E_{i_k}^j\right\|^2
=\frac{1}{N^2}\sum_{j=1}^{N}\mathbb{E}\|E_{i_k}^j\|^2
=\frac{\sigma_{E,i_k}^2}{N},
\end{equation}
and
\begin{equation}
\label{eq3.2}
\mathbb{E} \|\bar{b}_{i_k}-b_{i_k}\|^2
=\mathbb{E} \|\bar{\varepsilon}_{i_k}\|^2
=\mathbb{E} \left\|\frac{1}{N}\sum_{j=1}^{N} \varepsilon_{i_k}^j\right\|^2
=\frac{1}{N^2}\sum_{j=1}^{N}\mathbb{E}(\varepsilon_{i_k}^j)^2
=\frac{\sigma_{\varepsilon,i_k}^2}{N},
\end{equation}
which demonstrates that the signal averaging technique reduces the variance by a factor of $N$.
From a statistical perspective, the noise reduction is natural since the variance of the averages $\bar{a}_{i_k}$ and $\bar{b}_{i_k}$
quantifies the deviation of the estimators from the true values $a_{i_k}$ and $b_{i_k}$, respectively.
Consequently, employing the averaged data is expected to achieve a more accurate approximate solution than relying on a single measurement.

Motivated by this observation, we integrate the RGRK method with the signal averaging technique and propose the RGRK with signal averaging (RGRK-SA) method for doubly-noisy linear systems, summarized in Algorithm \ref{al2}. Since the averaged matrix $\bar{A}$ and averaged vector $\bar{b}$ can be precomputed, Algorithm \ref{al1} shares the same computational cost as Algorithm \ref{al2} in each iterate. This modification makes the RGRK method more robust and accurate in doubly-noisy cases.

\begin{algorithm}[h]
	\caption{The RGRK-SA method for doubly-noisy linear systems}
	\label{al2}
	\begin{algorithmic}[1]
		\State \textbf{Input}: Given $A^1,\ldots,A^j \in \mathbb{R}^{m \times n}$, $b^1,\ldots,b^N \in \mathbb{R}^{m}$, $x_{0}=0\in \mathbb{R}^{n}$, $\theta\in [0,1]$ and $M$
		\State \textbf{Ouput}: $x_M$
		\State compute $\bar{a}_i=\frac{1}{N}\sum_{j=1}^{N}a_i^j,~\bar{b}_i=\frac{1}{N}\sum_{j=1}^{N}b_i^j,~i=1,\ldots,m$
		\State \textbf{for} $k=0,1,2, \ldots,M$
		\State $\quad$$\quad$compute \begin{equation*}
		\mu_k=
		\theta\max_{1\leq i\leq m}\left\{
		\frac{\lvert \bar{a}_{i}^Tx_k-\bar{b}_{i}\rvert^2}{\|\bar{a}_i\|_2^2}
		\right\}
		+(1-\theta)\frac{\|\bar{A}x_k-\bar{b}\|_2^2}{\|\bar{A}\|_F^2}
		\end{equation*}
		\State $\quad$$\quad$determine the index set of positive integers
		\begin{equation*}
		\mathcal{U}_k=\left\{i
		\mid 
		\frac{\lvert \bar{a}_{i}^Tx_k-\bar{b}_{i}\rvert^2}{\|\bar{a}_i\|_2^2}
		\geq \mu_k
		\right\}
		\end{equation*}
		\State $\quad$$\quad$compute the $i$-th entry $\tilde{r}_k^{i}$ of the vector $\tilde{r}_k$ according to
		$$
		\tilde{r}_k^{i}=
		\left\{
		\begin{aligned}
		\bar{a}_{i}^Tx_k-\bar{b}_{i},&\text{if}~i\in \mathcal{U}_k\\
		0,& \text{otherwise}\\
		\end{aligned}
		\right.
		$$
		\State$\quad$$\quad$select $i_k\in\mathcal{U}_k$ with probability $P(i=i_k)$=$\frac{\lvert \bar{r}_k^{i}\rvert^2}{\|\bar{r}_k\|^2_2}$
		\State$\quad$$\quad$set $x_{k+1}=x_k-	\frac{\bar{a}_{i_k}^Tx_k-\bar{b}_{i_k}}{\|\bar{a}_{i_k}\|_2^2}\bar{a}_{i_k}$
		\State \textbf{end for}
	\end{algorithmic}
\end{algorithm}

\begin{remark}
	\label{remark:4}
\begin{enumerate}[(i)]
	\item When $N=1$, Algorithm \ref{al2} reduces to Algorithm \ref{al1}.
	\item As $N$ increases, it follows from (\ref{eq3.1}) and (\ref{eq3.2}) that the variance of the averages $\bar{a}_{i_k}$ and $\bar{b}_{i_k}$
	decreases, leading to more precise and unbiased estimators of the true signals $a_{i_k}$ and $b_{i_k}$.  
\end{enumerate}
\end{remark}

Building on the signal averaging technique, we can now present an improved convergence result for Algorithm \ref{al2}.

\begin{theorem}
\label{th2}
Let $\{x_k\}_{k\in\mathbb{N}}$ be generated by Algorithm \ref{al2} applied to the doubly-noisy system $\tilde{A}x\approx\tilde{b}$ with 
$\tilde{A}=A+E$ and $\tilde{b}=b+\varepsilon$.
Under Assumption \ref{assum1}, 
starting from any initial point $x_0\in\mathbb{R}^n$ it holds that
$$
\mathbb{E}\|x_{1}-\hat{x}\|^2
\leq
\left(
1- \frac{\mathbb{E}\tilde{\sigma}_{\min}^2(\bar{A})}{m}
\right)\mathbb{E}\|x_0-\hat{x}\|^2
+
\frac{1}{N}\cdot\frac{\sigma_{E}^2\|\hat{x}\|^2+\sigma_{\varepsilon}^2
}{ (\sqrt{m}-\frac{\gamma}{\sqrt{m}})^2},
$$
and
\begin{equation*}
\mathbb{E}\|x_{k+1}-\hat{x}\|^2
\leq
\left[
1-\left(\frac{\theta}{\gamma}m+(1-\theta)\right) \frac{\mathbb{E}\tilde{\sigma}_{\min}^2(\bar{A})}{m}
\right]\mathbb{E}\|x_k-\hat{x}\|^2
+
\frac{1}{N}\cdot\frac{\sigma_{E}^2\|\hat{x}\|^2+\sigma_{\varepsilon}^2
}{ (\sqrt{m}-\frac{\gamma}{\sqrt{m}})^2},~k\in \mathbb{N}^+,
\end{equation*}
where $\gamma=m-1$ and the relaxed parameter $\theta\in [0,1]$.
\end{theorem}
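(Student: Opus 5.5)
The plan is to repeat the argument of Theorem \ref{th1} with $(\tilde{A},\tilde{b})$ replaced by the averaged data $(\bar{A},\bar{b})$. The error is then controlled by the averaged noise $\bar{E}=\frac1N\sum_j E^j$ and $\bar{\varepsilon}=\frac1N\sum_j\varepsilon^j$. Finally, an outer expectation over the measurement noise brings in (\ref{eq3.1}) and (\ref{eq3.2}).

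\textbf{Step 1: deterministic one-step bound.} Write $\bar{a}_i=a_i+\bar{E}_i$ and $\bar{b}_i=b_i+\bar{\varepsilon}_i$. Since $a_i^T\hat{x}=b_i$, we have $\bar{a}_i^T\hat{x}-\bar{b}_i=\bar{E}_i\hat{x}-\bar{\varepsilon}_i$. Conditionally on the measurements, Algorithm \ref{al2} is exactly Algorithm \ref{al1} run on $(\bar{A},\bar{b})$. Theorem \ref{th1} therefore applies verbatim, with $\tilde{A}$ replaced by $\bar{A}$, $E$ by $\bar{E}$ and $\varepsilon$ by $\bar{\varepsilon}$. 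This gives, for $k\ge1$,
\begin{equation*}
\mathbb{E}_k\|x_{k+1}-\hat{x}\|^2\le\Big[1-\Big(\tfrac{\theta}{\gamma}\|\bar{A}\|_F^2+(1-\theta)\Big)\tfrac{\tilde{\sigma}_{\min}^2(\bar{A})}{\|\bar{A}\|_F^2}\Big]\|x_k-\hat{x}\|^2+\frac{\|\bar{E}\hat{x}-\bar{\varepsilon}\|^2}{(\|\bar{A}\|_F-\gamma/\|\bar{A}\|_F)^2},
\end{equation*}
and the analogous bound without the $\theta$ factor for $k=0$.

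\textbf{Step 2: specialize the constants.} I would use row normalization to set $\|\bar{a}_i\|=1$, so that $\|\bar{A}\|_F^2=m$ and $\gamma=m-1$. One caveat: Assumption \ref{assum1}(ii) normalizes the individual $a_i^j$, and their average need not have unit norm. I would either interpret the assumption as applying to the averaged rows (equivalently, normalize after averaging, which leaves the Kaczmarz step unchanged), or bound $\|\bar{a}_i\|\le1$ and note where this enters.

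\textbf{Step 3: take expectation over the noise.} The iterate $x_k$ depends on the same realization of $\bar{A}$ that appears in the contraction factor. To handle this, I would take the expectation over the noise after the conditional step and bound $\mathbb{E}[\tilde{\sigma}_{\min}^2(\bar{A})\|x_k-\hat{x}\|^2]$ from below by $\mathbb{E}\tilde{\sigma}_{\min}^2(\bar{A})\,\mathbb{E}\|x_k-\hat{x}\|^2$. This factorization is the main obstacle, since it needs an independence or decorrelation justification. The first thing I would try is to treat the measurements as fixed during the iteration and to argue heuristically that the noise on the rows and the iterate error decouple. Failing that, I would state the result with $\tilde{\sigma}_{\min}$ taken conditionally. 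For the horizon term, independence and the zero means give
\begin{equation*}
\mathbb{E}\|\bar{E}\hat{x}-\bar{\varepsilon}\|^2=\sum_i\big(\mathbb{E}(\bar{E}_i\hat{x})^2+\mathbb{E}\bar{\varepsilon}_i^2\big)\le\sum_i\Big(\tfrac{\sigma_{E,i}^2}{N}\|\hat{x}\|^2+\tfrac{\sigma_{\varepsilon,i}^2}{N}\Big)=\frac{\sigma_E^2\|\hat{x}\|^2+\sigma_\varepsilon^2}{N}.
\end{equation*}
Here the inequality uses Cauchy--Schwarz, and the cross term between $\bar{E}$ and $\bar{\varepsilon}$ vanishes provided they are independent. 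I would add that independence to the assumption if needed.

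Taking full expectations and combining these pieces yields both displayed inequalities, with the denominator $(\sqrt{m}-\gamma/\sqrt{m})^2$.
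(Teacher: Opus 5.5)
Your route is the paper's route: view Algorithm \ref{al2} as Algorithm \ref{al1} on $(\bar A,\bar b)$, reuse the proof of Theorem \ref{th1} with $\|\bar A\|_F^2=m$ and $\gamma=m-1$, then take an expectation over the noise. The genuine gap is the one you flag in Step 3 and leave open. For $k\ge1$ the displayed inequality requires $\mathbb{E}[\tilde\sigma_{\min}^2(\bar A)\|x_k-\hat x\|^2]\ge\mathbb{E}\tilde\sigma_{\min}^2(\bar A)\,\mathbb{E}\|x_k-\hat x\|^2$. A heuristic decoupling is not a proof, and a version with $\tilde\sigma_{\min}$ taken conditionally is a different theorem. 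The paper does not close this either. Its step ``taking expectation on $i_k,\bar E,\bar\varepsilon$ conditioning with $x_k$'', leading to (\ref{eq3.4}) and (\ref{eq3.9}), silently treats $x_k$ as independent of $\bar E$. That is false: $\bar A,\bar b$ are computed once, and every $x_k$ with $k\ge1$ is a function of them. There is also no generic reason for the inequality to hold. It asks for a nonnegative covariance between $\tilde\sigma_{\min}^2(\bar A)$ and $\|x_k-\hat x\|^2$, whereas a badly conditioned realization both lowers $\tilde\sigma_{\min}(\bar A)$ and slows the decay of the error, so one expects the correlation to be negative. A rigorous statement needs a change, for example one of these:
an almost-sure lower bound on $\tilde\sigma_{\min}^2(\bar A)$ in place of its expectation;
a fully conditional analysis, where the horizon is $m\|\bar E\hat x-\bar\varepsilon\|^2$ and the $1/N$ factor must be recovered separately after unrolling;
fresh measurements at every iteration. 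This last option legitimizes the factorization but loses the $\theta$-improved factor for $k\ge1$, because Proposition \ref{prop:1} uses that $x_k$ satisfies the $i_{k-1}$-th equation of the same system.

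The rest of your argument is sound and matches the paper. The horizon bound holds pointwise in the noise with the deterministic constant $m$, so taking its expectation is legitimate even though $x_k$ depends on the noise. Your row-wise Cauchy--Schwarz computation gives the same bound as the paper's $\|\bar E\hat x\|\le\|\bar E\|\|\hat x\|$. The paper also drops the $\bar E$--$\bar\varepsilon$ cross term without comment; uncorrelatedness suffices there. The $k=0$ inequality is fully justified, since $x_0$ does not depend on the measurements.

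On Step 2, the paper uses the literal reading $\|\bar a_i\|=1$ (its first display has no denominators), and only that reading gives the stated constants. Your fallback $\|\bar a_i\|\le1$ does not suffice, because $(\|\bar A\|_F-\gamma/\|\bar A\|_F)^2=(\min_i\|\bar a_i\|^2)^2/\|\bar A\|_F^2$ requires a lower bound on the row norms. Normalizing after averaging does not rescue it either. It changes $\mu_k$, $\mathcal{U}_k$ and the sampling probabilities, and rescales the noise by the random factors $1/\|\bar a_i\|$, so (\ref{eq3.1})--(\ref{eq3.2}) no longer apply as stated.
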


\begin{proof}
Borrowing a proof similar to that of Theorem \ref{th1}, we start with
$$
\begin{aligned}
\|	x_{k+1}-\hat{x}\|^2
=\|x_k-\hat{x}\|^2-(\bar{a}_{i_k}^T(x_k-\hat{x}))^2
+(\bar{E}_{i_k}\hat{x}-\bar{\varepsilon}_{i_k})^2.
\end{aligned}
$$
Taking expectation on $i_k,\bar{E}$ and $\bar{\varepsilon}$ conditioning with $x_k$ obtains
\begin{equation}
\label{eq3.4}
\mathbb{E}_{\bar{E},\bar{\varepsilon}}\mathbb{E}_k\|x_{k+1}-\hat{x}\|^2
=\|x_k-\hat{x}\|^2-\mathbb{E}_{\bar{E}}\mathbb{E}_{k}(\bar{a}_{i_k}^T(x_k-\hat{x}))^2
+\mathbb{E}_{\bar{E},\bar{\varepsilon}}\mathbb{E}_k(\bar{E}_{i_k}\hat{x}-\bar{\varepsilon}_{i_k})^2.
\end{equation}

It follows from the proof of Theorem \ref{th1} that we have
\begin{equation}
\label{eq3.7}
\mathbb{E}_{\bar{E}}\mathbb{E}_k(\bar{a}_{i_k}^Tx_k-\bar{b}_{i_k})^2
\geq
\frac{\mathbb{E}_{\bar{E}}\tilde{\sigma}_{\min}^2(\bar{A})}{m}
\|x_k-\hat{x}\|_2^2,~k=0,
\end{equation}
\begin{equation}
\label{eq3.9}
\mathbb{E}_{\bar{E}}\mathbb{E}_k(\bar{a}_{i_k}^Tx_k-\bar{b}_{i_k})^2
\geq
\left(\frac{\theta}{\gamma}m+(1-\theta)\right)\cdot\frac{\mathbb{E}_{\bar{E}}\tilde{\sigma}_{\min}^2(\bar{A})}{m}
\|x_k-\hat{x}\|_2^2,~k\in\mathbb{N}^{+},
\end{equation}
and
\begin{equation}
\begin{aligned}
\label{eq3.8}
\mathbb{E}_{\bar{E},\bar{\varepsilon}}\mathbb{E}_k
\|\bar{E}_{i_k}\hat{x}-\varepsilon_{i_k}\|^2
&\leq
\frac{\mathbb{E}_{\bar{E},\bar{\varepsilon}}\|\bar{E}\hat{x}-\varepsilon\|_2^2}{ (\sqrt{m}-\frac{\gamma}{\sqrt{m}})^2}
\\
&=\frac{\mathbb{E}_{\bar{E}}\|\bar{E}\hat{x}\|^2+\mathbb{E}_{\bar{\varepsilon}}\|\bar{\varepsilon}\|_2^2}{ (\sqrt{m}-\frac{\gamma}{\sqrt{m}})^2}\\
&
\leq 
\frac{\mathbb{E}_{\bar{E}}\|\bar{E}\|^2\cdot\|\hat{x}\|^2+\mathbb{E}_{\bar{\varepsilon}}\|\bar{\varepsilon}\|_2^2}{ (\sqrt{m}-\frac{\gamma}{\sqrt{m}})^2}\\
&=
\frac{\sigma_{E}^2\|\hat{x}\|^2+\sigma_{\varepsilon}^2
	}{ N(\sqrt{m}-\frac{\gamma}{\sqrt{m}})^2}.
\end{aligned}
\end{equation}
Combining (\ref{eq3.4}), (\ref{eq3.7}), (\ref{eq3.9}), and (\ref{eq3.8}), we have
$$
\mathbb{E}_{\bar{E},\bar{\varepsilon}}\mathbb{E}_k\|x_{k+1}-\hat{x}\|^2
\leq
\left(1-\frac{\mathbb{E}_{\bar{E}}\tilde{\sigma}_{\min}^2(\bar{A})}{m}\right)\|x_k-\hat{x}\|^2
+\frac{\sigma_{E}^2\|\hat{x}\|^2+\sigma_{\varepsilon}^2
}{ N(\sqrt{m}-\frac{\gamma}{\sqrt{m}})^2},~k=0,
$$
and
$$
\mathbb{E}_{\bar{E},\bar{\varepsilon}}\mathbb{E}_k\|x_{k+1}-\hat{x}\|^2
\leq
\left[1-\left(\frac{\theta}{\gamma}m+(1-\theta)\right)\cdot\frac{\mathbb{E}_{\bar{E}}\tilde{\sigma}_{\min}^2(\bar{A})}{m}\right]\|x_k-\hat{x}\|^2
+\frac{\sigma_{E}^2\|\hat{x}\|^2+\sigma_{\varepsilon}^2
}{ N(\sqrt{m}-\frac{\gamma}{\sqrt{m}})^2},~k\in\mathbb{N}^{+}.
$$
Hence, taking the full expectation on both sides, the proof is completed.
\end{proof}

Theorem \ref{th2} decomposes the residual into
the sum of a bias term that decays exponentially fast with the number of iterations $k$ and a variance term that decays at the rate $1/N$ with the number of measurements $N$.
It leads to the linear convergence
of the iterates at the beginning and an $O(1/N)$ rate of convergence later on.

\begin{remark}
\label{remark:2}
It follows from Theorem \ref{th2} that the convergence rate of RGRK-SA increases monotonically with the value of the parameter $\theta$.
\end{remark}

\begin{remark}
	\label{remark:3}
Under Assumption \ref{assum1} with $N=1$, the convergence result in Theorem \ref{th1} can be reformulated as 
$$
\mathbb{E}\|x_{k+1}-\hat{x}\|^2
\leq 
\left[1-\left(\frac{\theta}{\gamma}m+(1-\theta)\right)\cdot\frac{\mathbb{E}\tilde{\sigma}_{\min}^2(\tilde{A})}{m}\right]
\mathbb{E}\|x_k-\hat{x}\|_2^2
+
\frac{\sigma_{E}^2\|\hat{x}\|^2+\sigma_{\varepsilon}^2
}{ (\sqrt{m}-\frac{\gamma}{\sqrt{m}})^2},~k\in\mathbb{N}^{+}.
$$
By employing $N$ noisy measurements and averaging them, our proposed RGRK-SA reduces the radius of the ball from $\frac{\sqrt{\sigma_{E}^2\|\hat{x}\|^2+\sigma_{\varepsilon}^2}
}{\sqrt{m}-\frac{\gamma}{\sqrt{m}}}$ to 
$\frac{1}{\sqrt{N}}\cdot\frac{\sqrt{\sigma_{E}^2\|\hat{x}\|^2+\sigma_{\varepsilon}^2}
}{\sqrt{m}-\frac{\gamma}{\sqrt{m}}}$.
Consequently, we have $\mathbb{E}\|x_{k}-\hat{x}\|^2\rightarrow 0$ as $k\rightarrow \infty$ and $N\rightarrow \infty$.
To conclude, RGRK equipped with signal averaging can eliminate the finite
convergence horizon caused by noise.
\end{remark}

The corollary 7.3.5 in \cite{horn2012matrix} proposed a pair of singular value perturbation results, which is summarized as follows.
\begin{proposition}[Corollary 7.3.5 in \cite{horn2012matrix}]
\label{prop:2}
Let $A,E\in\mathbb{C}^{m\times n}$ and $r=\min\{m,n\}$. Let $\sigma_1(A)\geq \cdots\geq \sigma_{q}(A)$ and $\sigma_1(B)\geq \cdots\geq \sigma_{q}(B)$ be the nonincreasingly ordered singular values of $A$ and $E$, respectively. Then $|\sigma_i(A)-\sigma_i(B)|\leq \||A-B|\|_2$ for each $i=1,\cdots,q$.
\end{proposition}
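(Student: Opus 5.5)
We read the statement with its evident typos fixed: the second matrix is $B$ (not $E$), its singular values are $\sigma_1(B)\geq\cdots\geq\sigma_q(B)$, and $q=r=\min\{m,n\}$. The norm $\||\cdot|\|_2$ is the spectral norm. The plan is to derive the inequality from the Courant--Fischer min--max characterization of singular values. Concretely, for $i=1,\ldots,q$ we will use
$$\sigma_i(A)=\max_{\substack{S\subseteq\mathbb{C}^n\\ \dim S=i}}\ \min_{\substack{x\in S\\ \|x\|_2=1}}\|Ax\|_2 .$$
This holds because $\sigma_i(A)^2=\lambda_i(A^*A)$ and the Courant--Fischer theorem applies to the Hermitian matrix $A^*A$, with $x^*A^*Ax=\|Ax\|_2^2$. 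Taking square roots is harmless since everything involved is nonnegative.

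The key step is a pointwise triangle inequality inside the min--max. For every unit vector $x$,
$$\|Ax\|_2\leq \|Bx\|_2+\|(A-B)x\|_2\leq \|Bx\|_2+\||A-B|\|_2 .$$
Fix an $i$-dimensional subspace $S$ and minimize over unit $x\in S$. This gives
$$\min_{x\in S}\|Ax\|_2\leq \min_{x\in S}\|Bx\|_2+\||A-B|\|_2,$$
because adding a constant commutes with taking the minimum. Maximizing over all $S$ with $\dim S=i$ then yields $\sigma_i(A)\leq\sigma_i(B)+\||A-B|\|_2$. Exchanging the roles of $A$ and $B$, and using $\||B-A|\|_2=\||A-B|\|_2$, gives the reverse inequality. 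Together these prove $|\sigma_i(A)-\sigma_i(B)|\leq\||A-B|\|_2$.

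The only step needing care is justifying the min--max formula for singular values, including the index range when $m<n$. Here $A^*A$ is $n\times n$ with eigenvalues $\sigma_1^2,\ldots,\sigma_q^2$ followed by $n-q$ zeros, so the formula is valid for $i\leq q$. As a fallback, we can instead apply Weyl's eigenvalue perturbation inequality $|\lambda_j(H)-\lambda_j(K)|\leq\||H-K|\|_2$ to the Hermitian dilations
$$H=\begin{pmatrix}0&A\\ A^*&0\end{pmatrix},\qquad K=\begin{pmatrix}0&B\\ B^*&0\end{pmatrix}.$$
The eigenvalues of $H$ are $\pm\sigma_i(A)$ together with zeros. Moreover $\||H-K|\|_2=\||A-B|\|_2$, so reading off the $i$-th largest eigenvalue of each dilation gives the same conclusion.
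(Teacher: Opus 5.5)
Your proof is correct, and you read the garbled statement the intended way: $B$ in place of $E$, $q=r=\min\{m,n\}$, and the spectral norm. The paper gives no proof of its own; it only quotes Corollary 7.3.5 of Horn and Johnson. There the inequality is obtained by applying Weyl's eigenvalue perturbation inequality to the Hermitian (Jordan--Wielandt) dilations $\begin{pmatrix}0&A\\ A^*&0\end{pmatrix}$ and $\begin{pmatrix}0&B\\ B^*&0\end{pmatrix}$. This is exactly your fallback.

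Your primary route is genuinely different. You apply Courant--Fischer to $A^*A$ to get $\sigma_i(A)=\max_{\dim S=i}\min_{x\in S,\|x\|_2=1}\|Ax\|_2$. You then push the pointwise bound $\|Ax\|_2\le\|Bx\|_2+\|A-B\|_2$ through the min and the max. This is in effect the proof of Weyl's inequality carried out directly on singular values. It is self-contained and needs no knowledge of the dilation's spectrum. The only bookkeeping is the one you already did: when $m<n$, $A^*A$ has $n-q$ extra zero eigenvalues, which do not affect indices $i\le q$.

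The dilation route instead uses the Hermitian Weyl inequality as a black box. It shows the singular-value bound is literally a special case of the eigenvalue bound, which is convenient for transferring other Hermitian perturbation results to singular values. Its cost is the standard check that, for $i\le q$, the $i$-th largest eigenvalue of the dilation is $\sigma_i(A)$. This holds because the ordered spectrum is $\sigma_1,\dots,\sigma_q$, then $|m-n|$ zeros, then $-\sigma_q,\dots,-\sigma_1$. The same fact gives that the dilation of $A-B$ has spectral norm $\|A-B\|_2$.
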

In this paper, we have $A,E\in\mathbb{R}^{m\times n}$. It follows from Proposition \ref{prop:2} that we obtain
$$\mathbb{E}_{\bar{E}}\tilde{\sigma}_{\min}^2(\tilde{A})=\mathbb{E}_{\bar{E}}\tilde{\sigma}_{\min}^2(A+\bar{E})\geq \mathbb{E}_{\bar{E}}\left(\max\{\sigma_{\min}(A)-\|\bar{E}\|_2,0\}\right)^2,$$
which can be used to reformulate the convergence rate established in Theorem \ref{th2}.
In particular, by invoking the definition of expectation, we obtain
$$
\begin{aligned}
\mathbb{E}_{\bar{E}}\left(\max\{\sigma_{\min}(A)-\|\bar{E}\|_2,0\}\right)^2
&\geq
\left(\max\{\sigma_{\min}(A)-t,0\}\right)^2P(\|\bar{E}\|_2\leq t)\\
&= 
\left(\max\{\sigma_{\min}(A)-t,0\}\right)^2\left(1-P(\|\bar{E}\|_2^2\geq t^2)\right).
\end{aligned}
$$
Applying Markov's inequality yields
$$
\begin{aligned}
	\mathbb{E}_{\bar{E}}\left(\max\{\sigma_{\min}(A)-\|\bar{E}\|_2,0\}\right)^2
&\geq
\left(\max\{\sigma_{\min}(A)-t,0\}\right)^2\left(1-\frac{\mathbb{E}\|\bar{E}\|_2^2}{t^2}\right)\\
&\geq 
\left(\max\{\sigma_{\min}(A)-t,0\}\right)^2\left(1-\frac{\mathbb{E}\|\bar{E}\|_F^2}{t^2}\right)\\
&=\left(\max\{\sigma_{\min}(A)-t,0\}\right)^2\left(1-\frac{\sigma_{E}^2}{Nt^2}\right),
\end{aligned}
$$
where the second inequality follows from $\|\bar{E}\|_2^2\leq \|\bar{E}\|_F^2$. If we let $t=\sqrt{\frac{2}{N}}\sigma_{E}<\sigma_{\min}(A)$, then we arrive at
$$	\mathbb{E}_{\bar{E}}\tilde{\sigma}_{\min}^2(\tilde{A})\geq
\frac{1}{2}\left(\sigma_{\min}(A)-\sqrt{\frac{2}{N}}\sigma_{E}\right)^2.$$

\section{Numerical Experiments}
\label{sec4}

In this section, we carry out some numerical experiments to compare the performance of RGRK, RGRK-SA, and RK in \cite{bergou2024note} for solving doubly-noisy linear systems with two types of coefficient matrices. 
The first type of exact matrix $A\in\mathbb{R}^{m\times n}$ is the random Gaussian matrix generated by using MATLAB function ‘randn’; the second type is chosen from the SuiteSparse Matrix Collection in \cite{davis2011university}. 
The exact vector $b\in\mathbb{R}^m$ is taken to be $b=A\hat{x}$, where the exact solution $\hat{x}\in\mathbb{R}^n$ is generated by MATLAB function 'randn'. 
For $j=1,\ldots,N$, we define the noisy matrices and vectors by $\tilde{A}^{j}=A+\sigma_{E}E^{j}$ and $\tilde{b}^{j}=b+\sigma_{\varepsilon}\varepsilon^{j}$, where $\sigma_{E},\sigma_{\varepsilon}\geq 0$ denote the noise magnitudes, and the noise $E^{j}\in\mathbb{R}^{m\times n}$ and $\varepsilon^{j}\in\mathbb{R}^{m}$ are generated by MATLAB function 'randn'. In our tests, let $\sigma_{E}=1\%,\sigma_{\varepsilon}=1\%$.

All implementations are initialized with $x_{0}=0\in \mathbb{R}^{n}$ and
terminate either when the maximum number of iterations $M$ is reached or the relative error falls below $10^{-1}$.
The relative error at $k$-th iterate is defined by
$$\text{Error}(k)=\frac{\|x_k-\hat{x}\|}{\|\hat{x}\|}.$$
In each experiment, we record the median of residual error over 50 trials. 
All experiments are performed with MATLAB (version R2021b) on a personal computer with
2.80GHZ CPU(Intel(R) Core(TM) i7-1165G7), 16-GB memory, and Windows operating system
(Windows 10).

\subsection{The effect of $\theta$ and $N$}

We perform experiments on doubly-noisy systems generated by $400\times 200$
random Gaussian matrices $A\in\mathbb{R}^{m\times n}$ to test the effect of $\theta$ and $N$ on RGRK and RGRK-SA, respectively.
The maximum number of iterations is set to $M=4000$.

Fix $N=10$ and select different values of $\theta$ from the set $\{0.2,0.4,0.6,0.8,1\}$. 
Figure \ref{fig4} displays the median residuals over 50 trials for RGRK and RGRK-SA, varying in different values of the parameter $\theta$.
According to Figure \ref{fig4}, we find that both RGRK and RGRK-SA achieve faster convergence speed with increasing values of the parameter $\theta$, which is consistent with the theoretical analysis in Remark \ref{remark:1} and Remark \ref{remark:2}.

Fix $\theta=1$ and vary $N$ over the set $\{1,10,10^2,10^3\}$. Figure \ref{fig3} illustrates the median residuals over 50 trials for RGRK-SA under varying values of $N$. As shown in Figure \ref{fig3}, the accuracy of RGRK-SA in recovering the solution of doubly-noisy systems improves as the number of observations $N$ increases, which validates Remark \ref{remark:3}.

\begin{figure}[H]
	\centering
	\subfigure[RGRK]{
		\includegraphics[width=0.33\linewidth,height=0.27\textwidth]{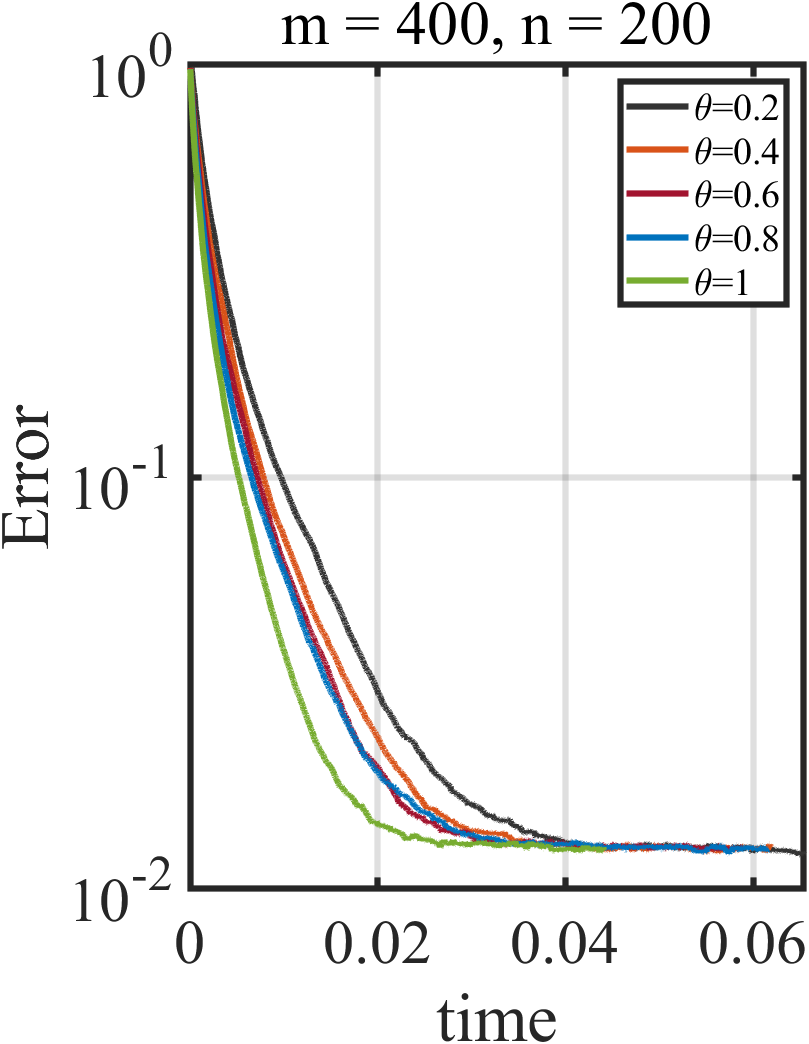}
	}
	\subfigure[RGRK-SA]{
		\includegraphics[width=0.33\linewidth,height=0.27\textwidth]{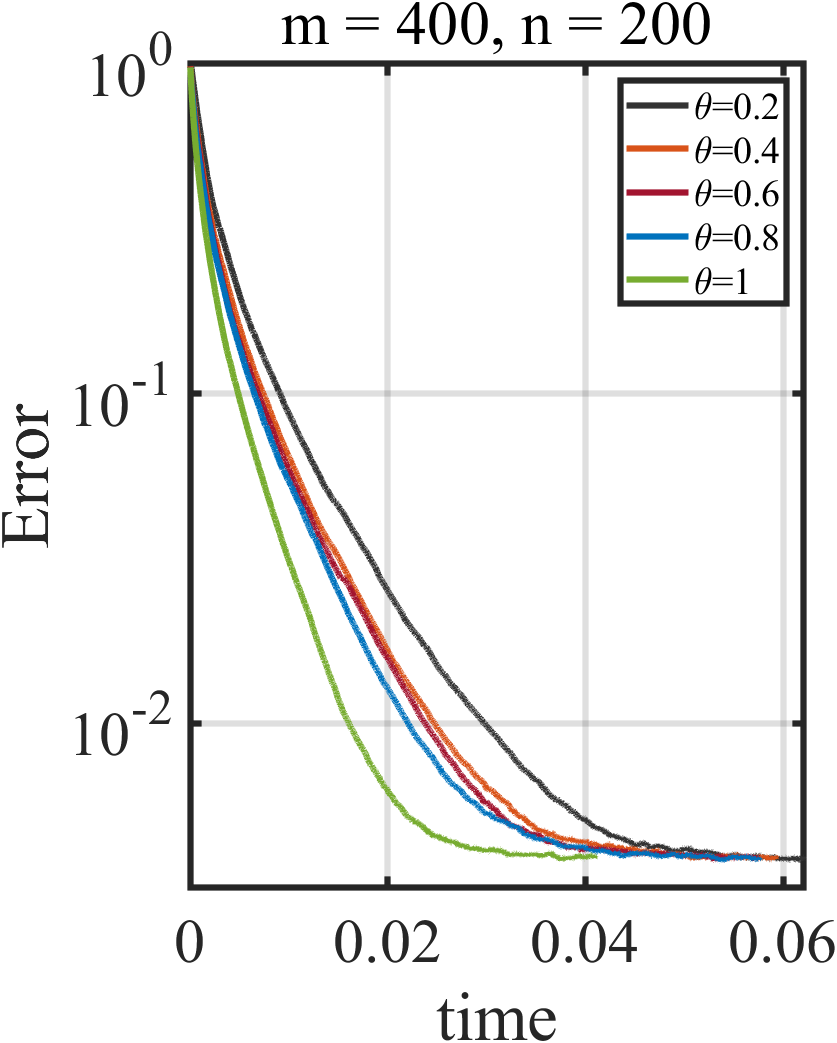}}
	\caption{The effect of $\theta$ on RGRK and RGRK-SA}
	\label{fig4}
\end{figure}

\begin{figure}[H]
	\centering
	\subfigure{
		\includegraphics[width=0.33\linewidth,height=0.27\textwidth]{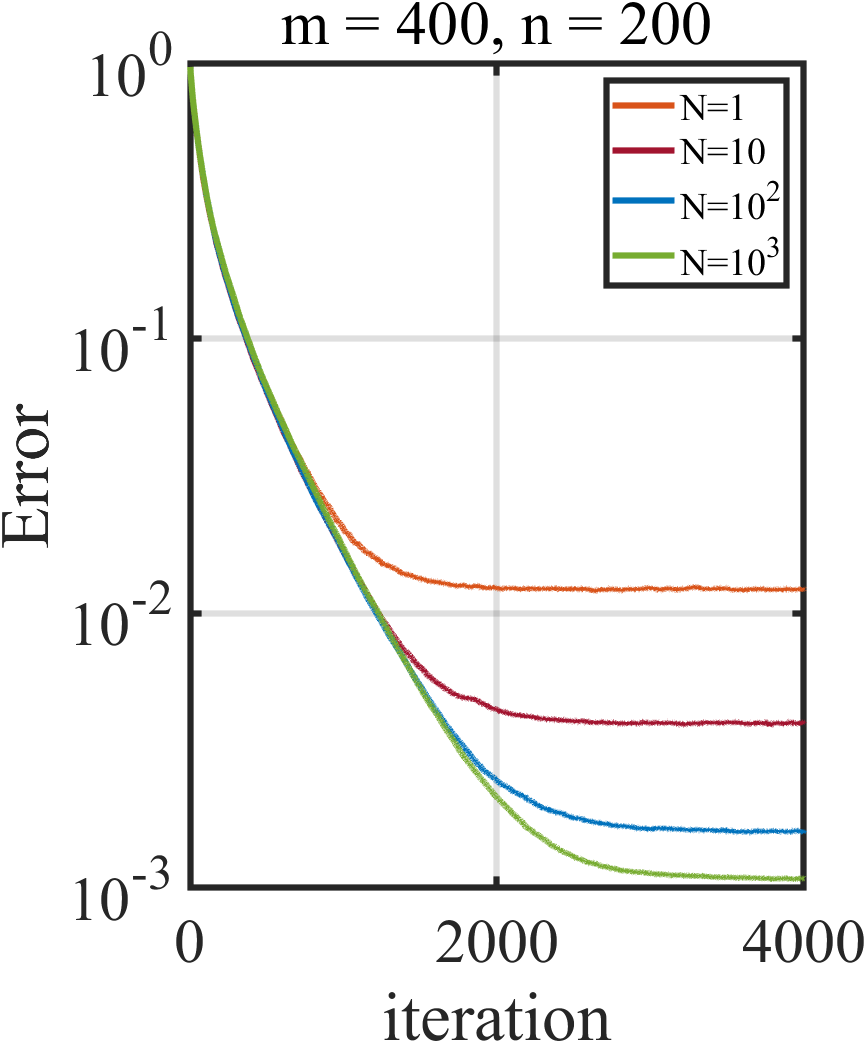}
		\includegraphics[width=0.33\linewidth,height=0.27\textwidth]{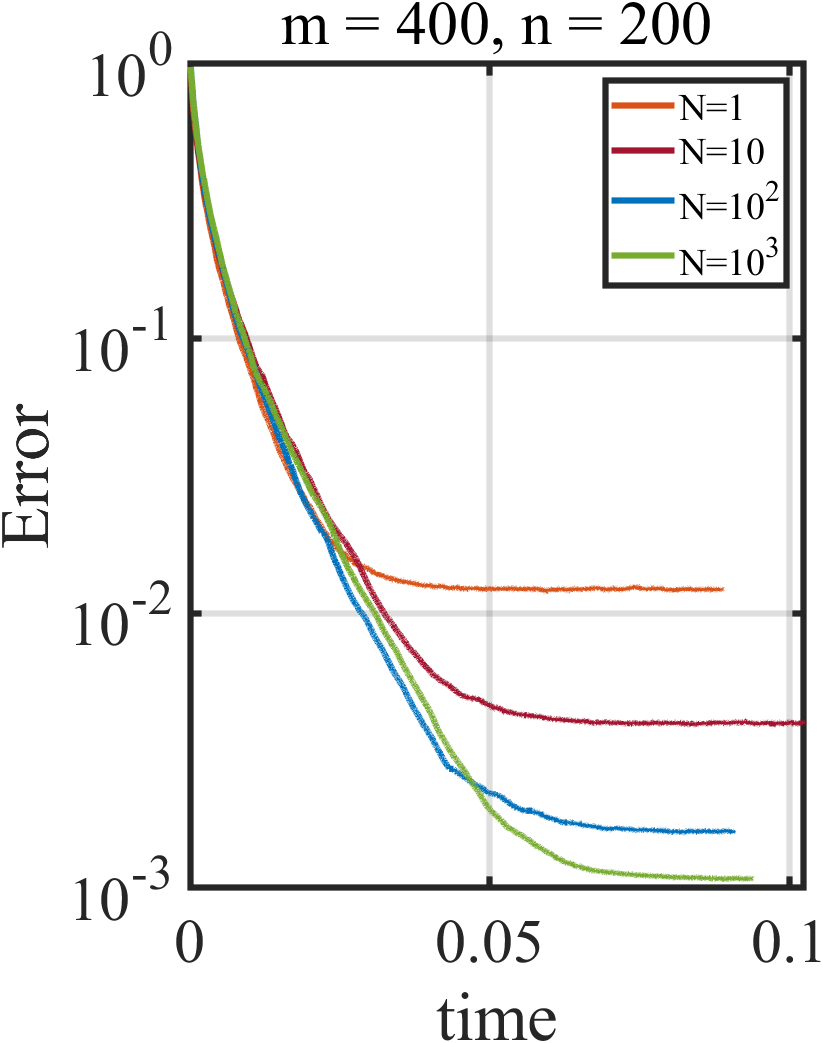}}
	\caption{The effect of $N$ on RGRK-SA}
	\label{fig3}
\end{figure}

\subsection{Comparisons with RK}

In this subsection, we consider doubly-noisy systems constructed from two types of matrices to compare the convergence behavior of RK, RGRK, and RGRK-SA.
We set $\theta\in\{0.5,1\}$ and $N=20$. Note that RGRK with $\theta=1$ reduces to RK equipped with the maximal correction sampling method.

\subsubsection{The simulated data}

We first construct doubly-noisy systems based on overdetermined linear systems with Gaussian matrices. As illustrated in Figure \ref{fig2}, the convergence rates of RGRK and RGRK-SA are comparable, and both of them are faster than RK. While RK and RGRK achieve similar accuracy, RGRK-SA attains the highest accuracy for solving doubly-noisy systems. These results further demonstrate that the signal averaging technique can effectively enhance the accuracy of RGRK.


\begin{figure}[H]
	\centering
	\subfigure{
		\includegraphics[width=0.33\linewidth,height=0.27\textwidth]{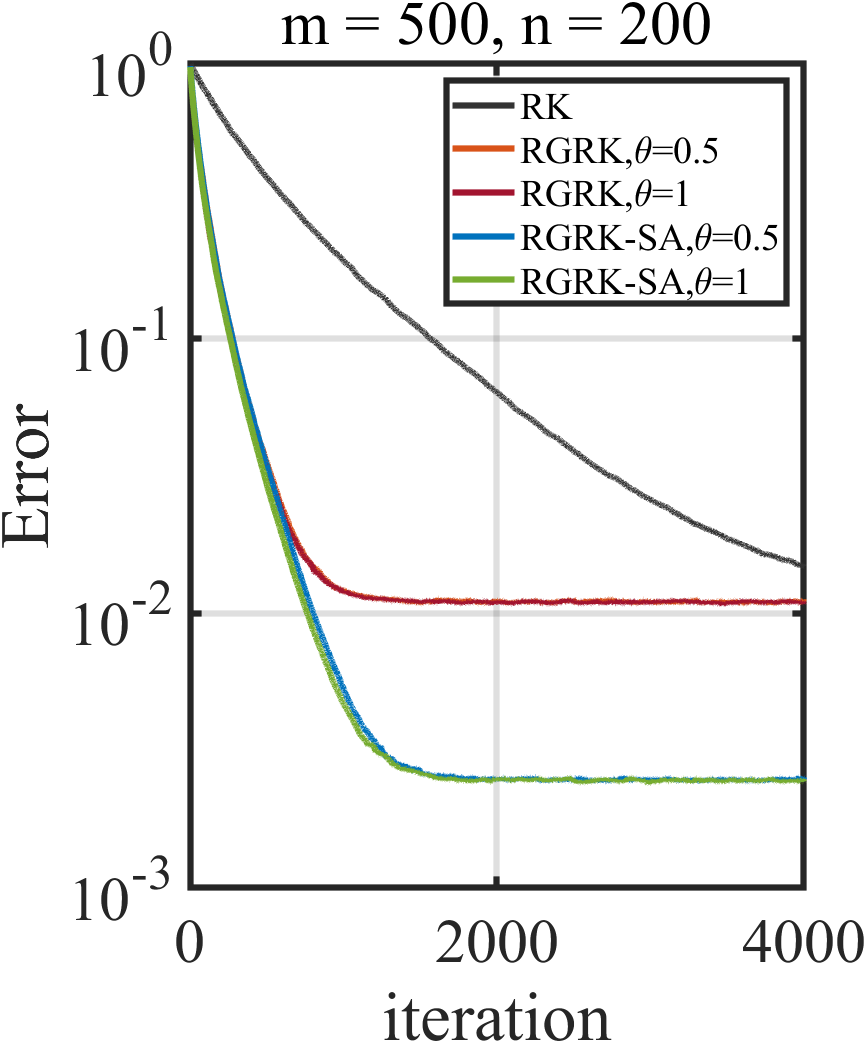}
		\includegraphics[width=0.33\linewidth,height=0.27\textwidth]{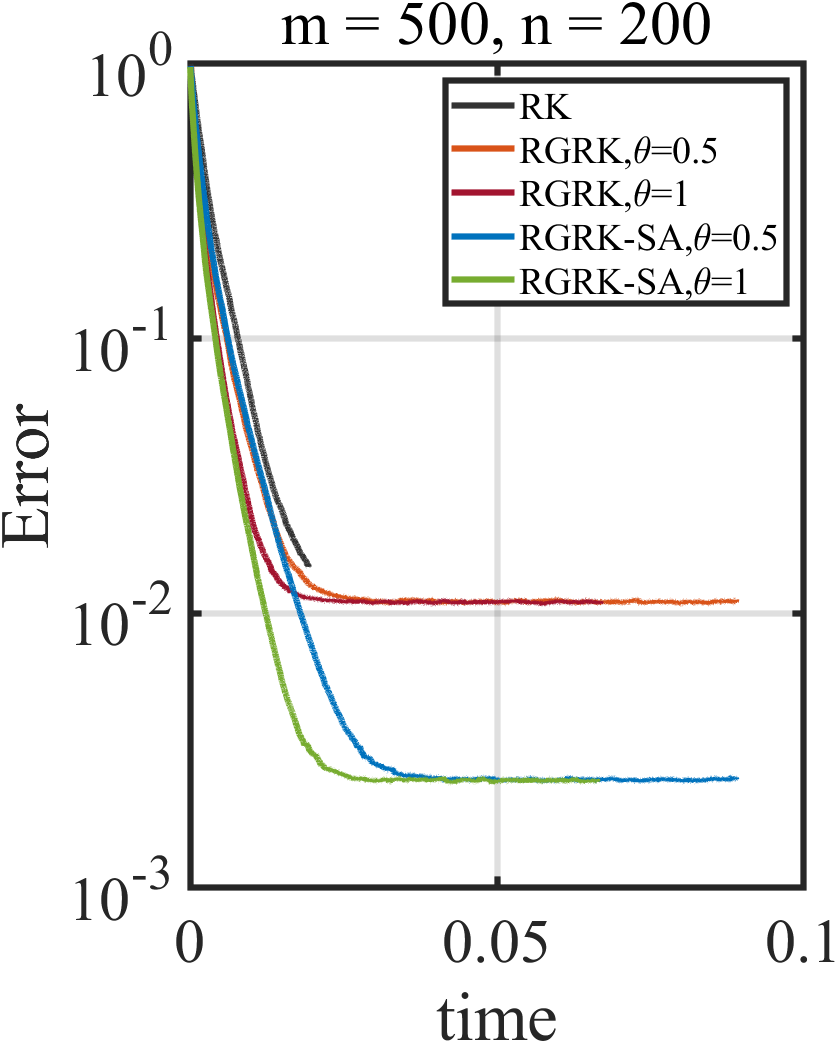}}
	\subfigure{
		\includegraphics[width=0.33\linewidth,height=0.27\textwidth]{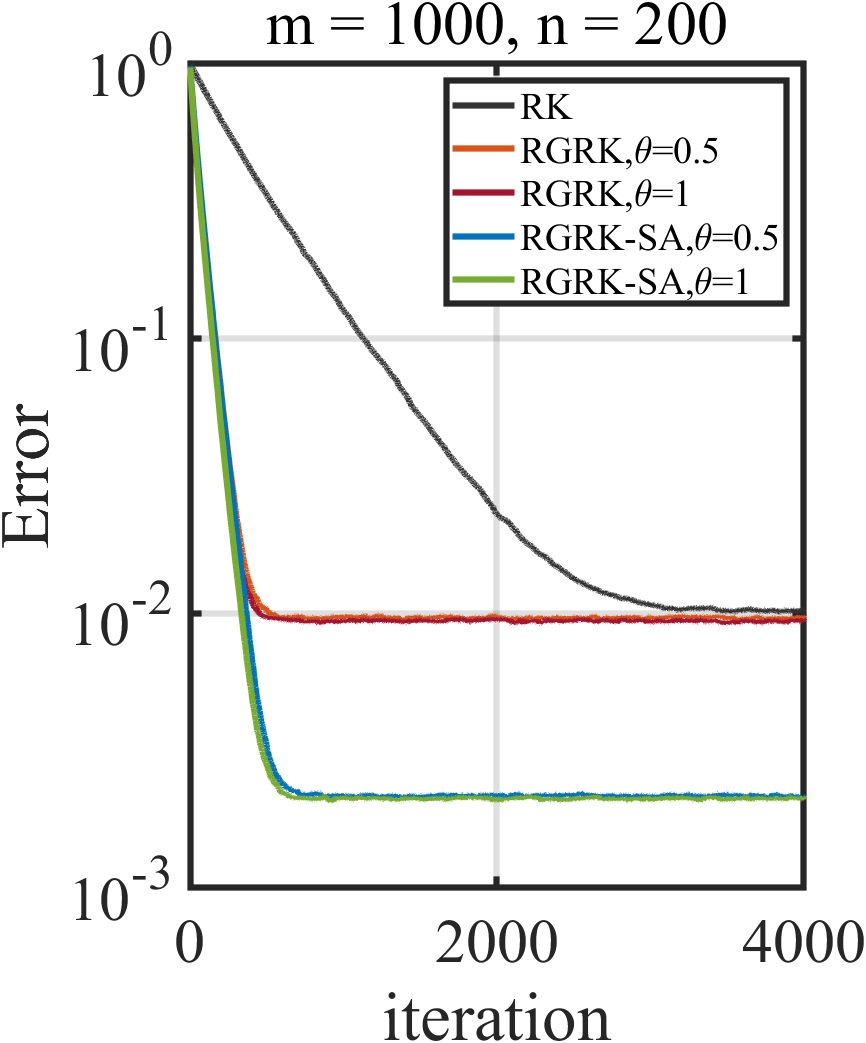}
		\includegraphics[width=0.33\linewidth,height=0.27\textwidth]{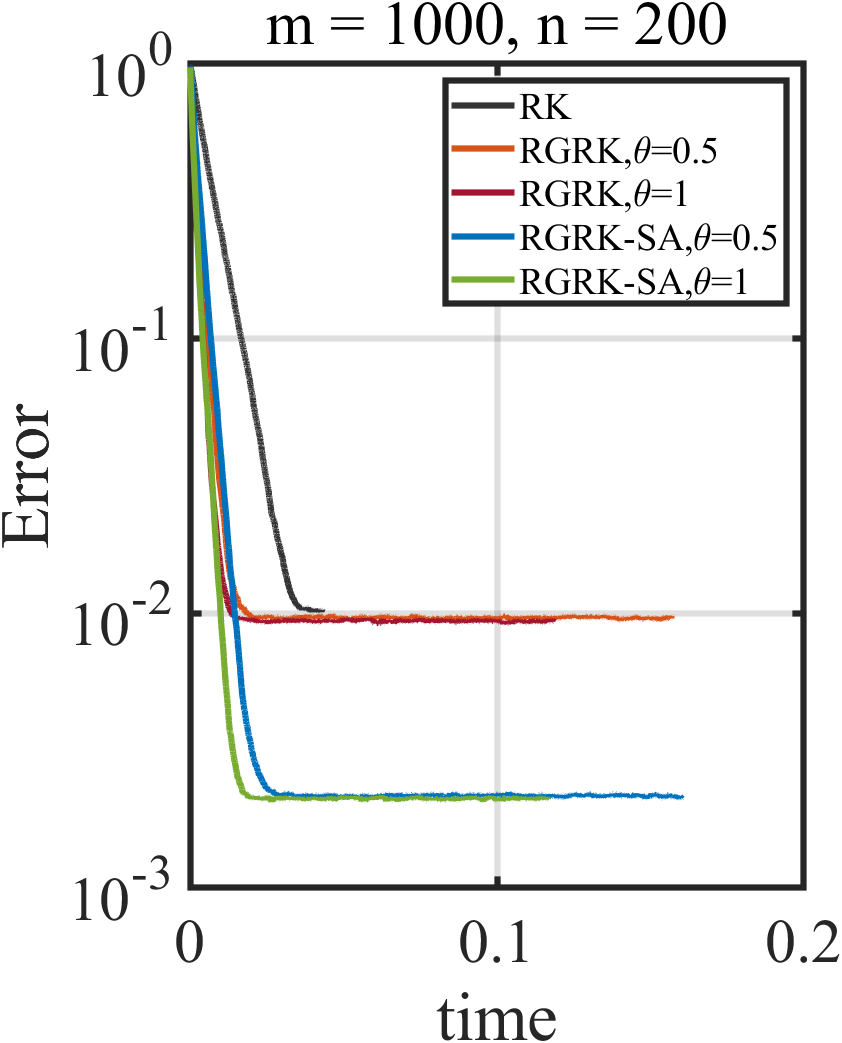}}
	\subfigure{
		\includegraphics[width=0.33\linewidth,height=0.27\textwidth]{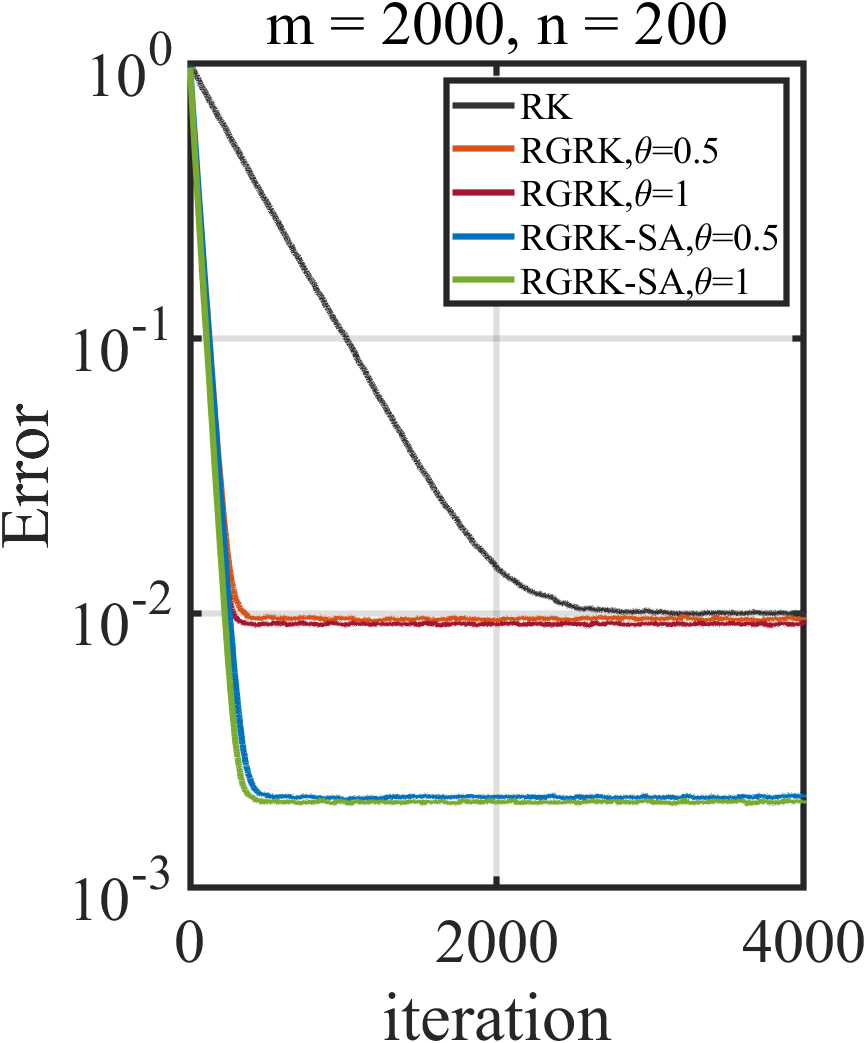}
		\includegraphics[width=0.33\linewidth,height=0.27\textwidth]{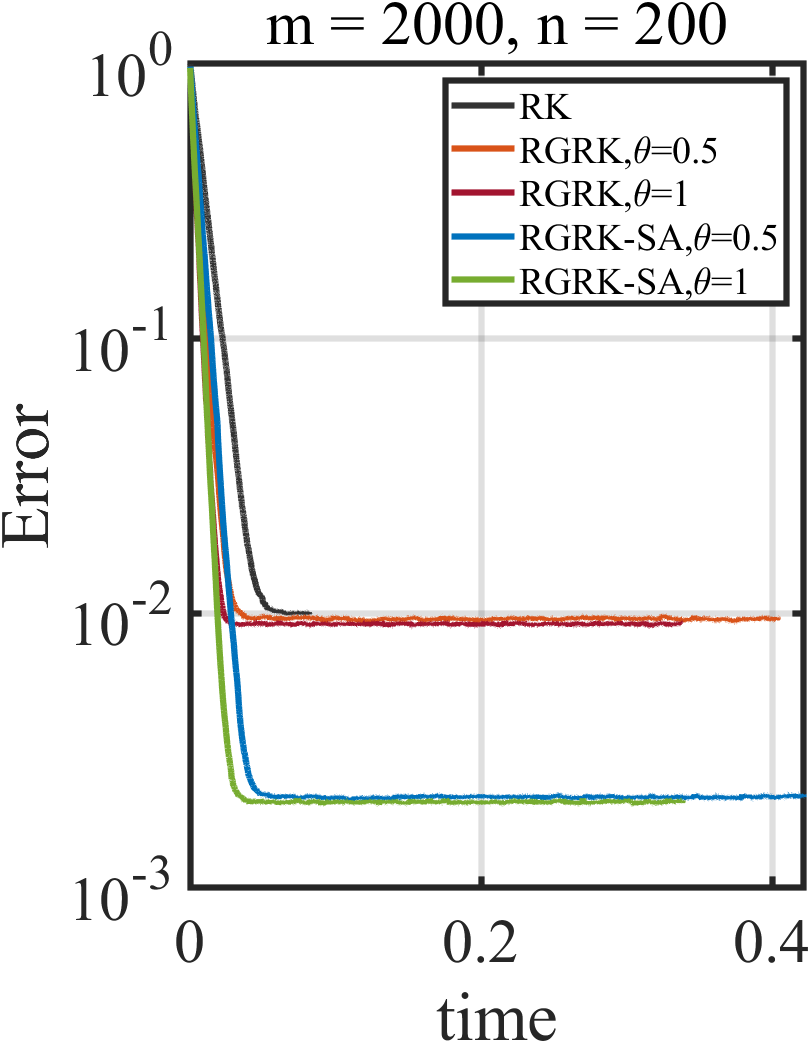}}
	\caption{Relative error and computing time after 4000 iterations of RK, RGRK, and RGRK-SA for simulated linear systems}
	\label{fig2}
\end{figure}

\subsubsection{The real-world data}

Next, we construct doubly-noisy systems based on real-world matrices from the SuiteSparse Matrix Collection in \cite{davis2011university}, summarized in Table \ref{table2}.
The algorithms are terminated when the maxiter iteration reaches $M=2000$. Figure \ref{fig1} plots the median of iterations and computing time (CPU) over 50 trials. It is observed that RGRK-SA outperforms RGRK in terms of both iteration counts and computing times, and both of them are faster than RK in \cite{bergou2024note}.

\begin{figure}[H]
	\centering
	\subfigure[ash958]{
	\includegraphics[width=0.33\linewidth,height=0.27\textwidth]{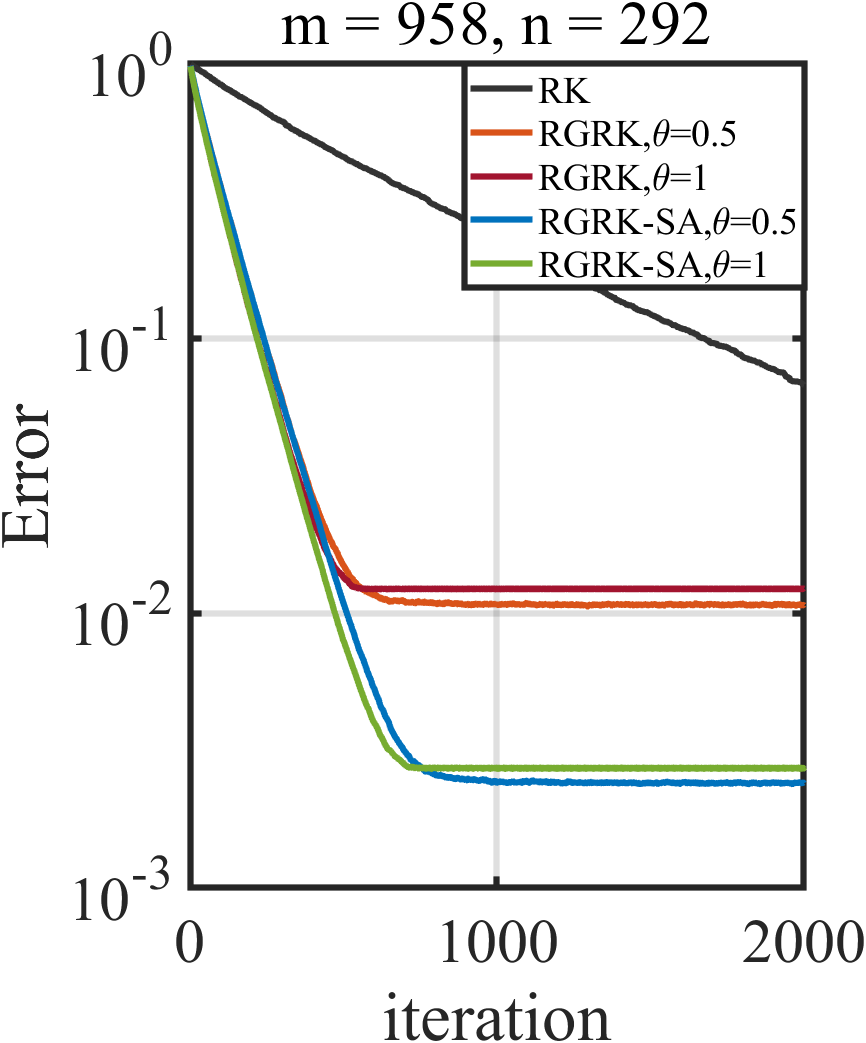}
    \includegraphics[width=0.33\linewidth,height=0.27\textwidth]{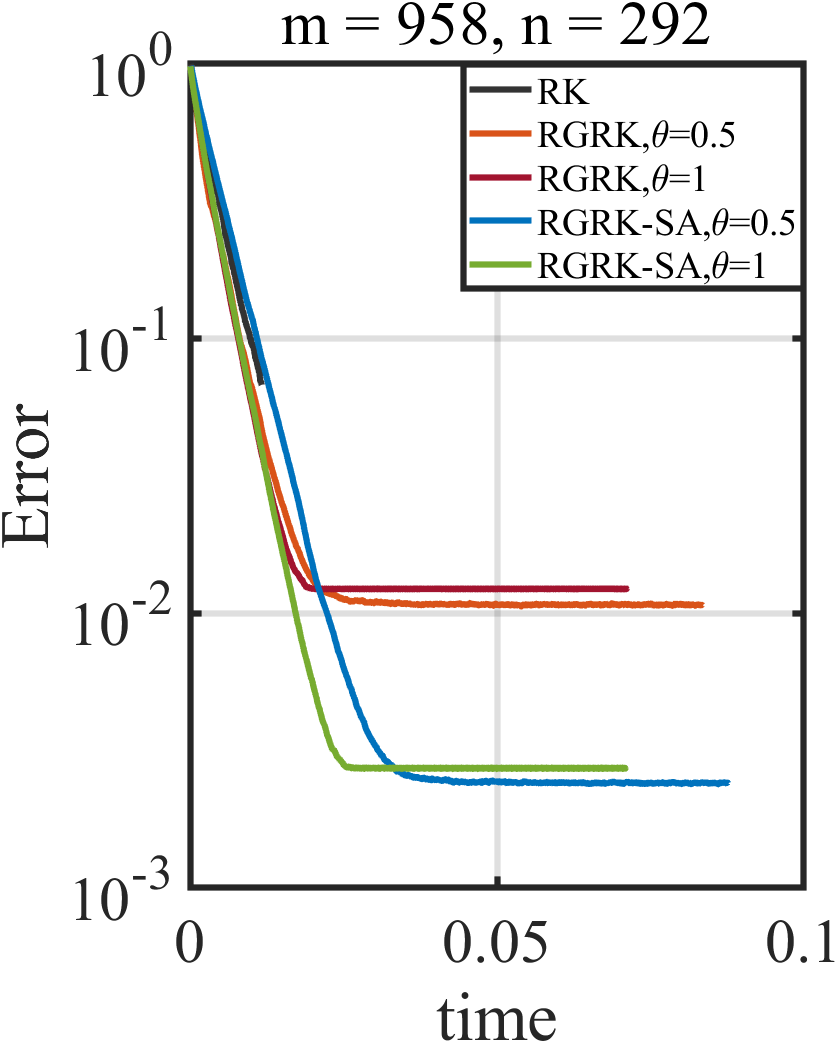}}
	\subfigure[ash219]{
		\includegraphics[width=0.33\linewidth,height=0.27\textwidth]{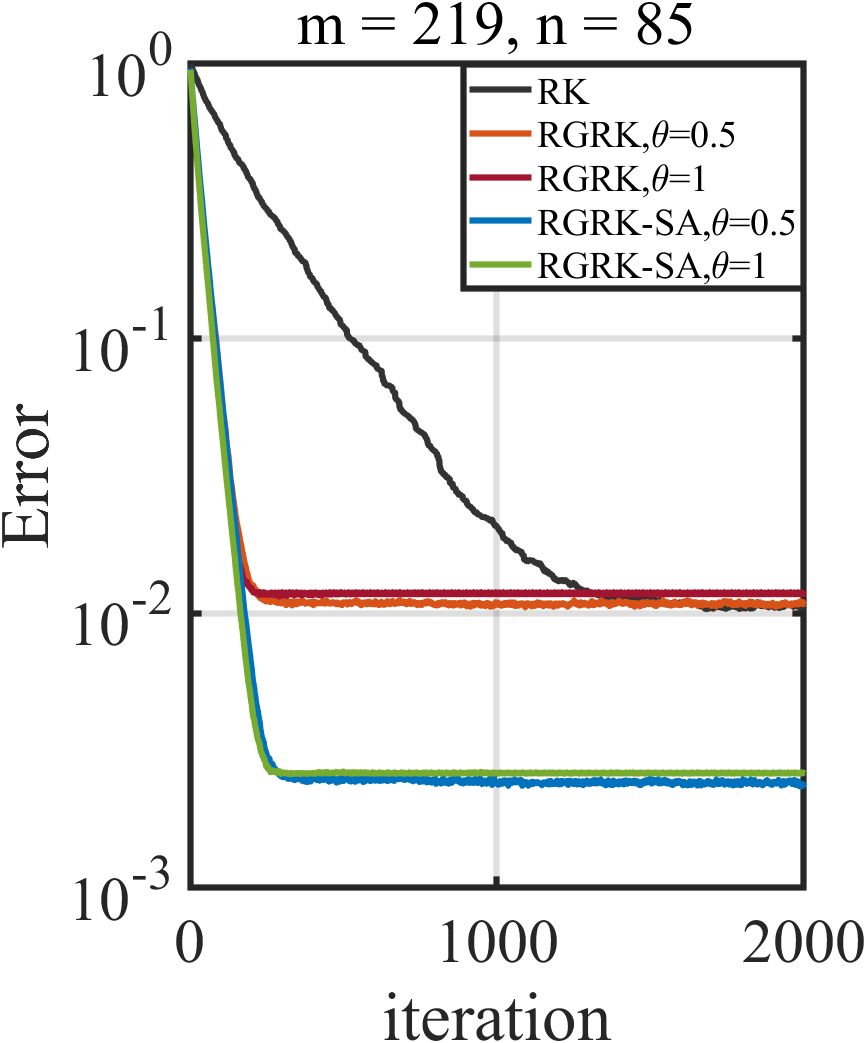}
	    \includegraphics[width=0.33\linewidth,height=0.27\textwidth]{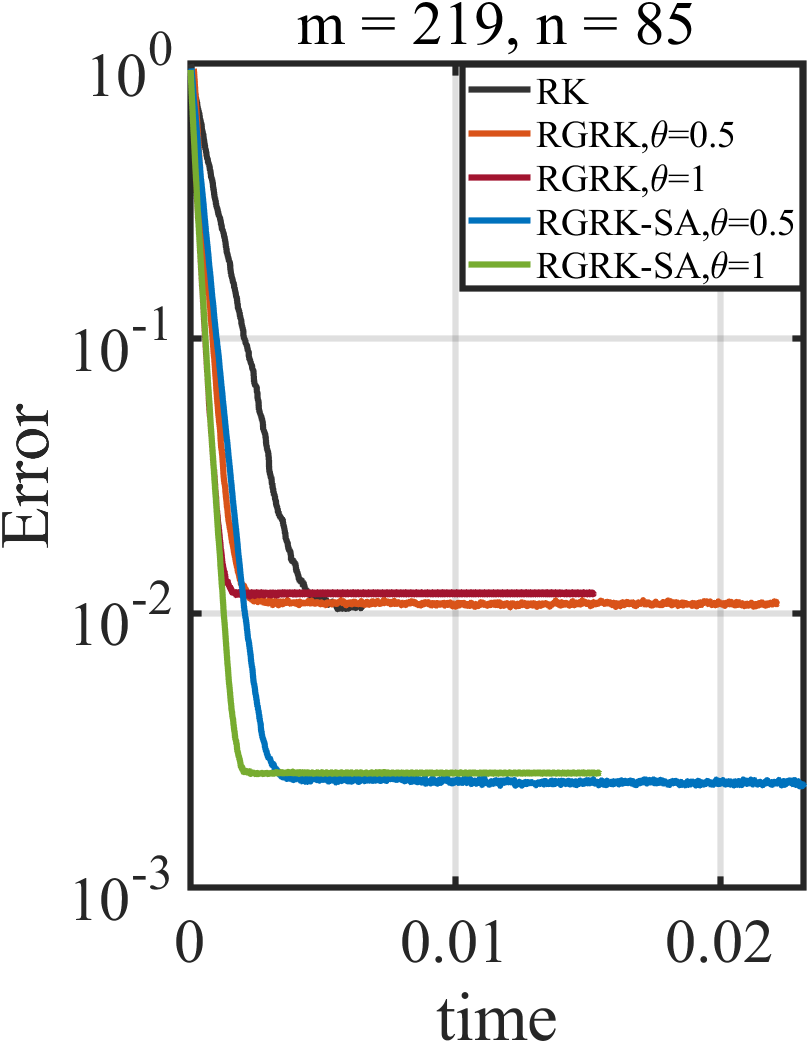}}
	\subfigure[abtaha1]{
		\includegraphics[width=0.33\linewidth,height=0.27\textwidth]{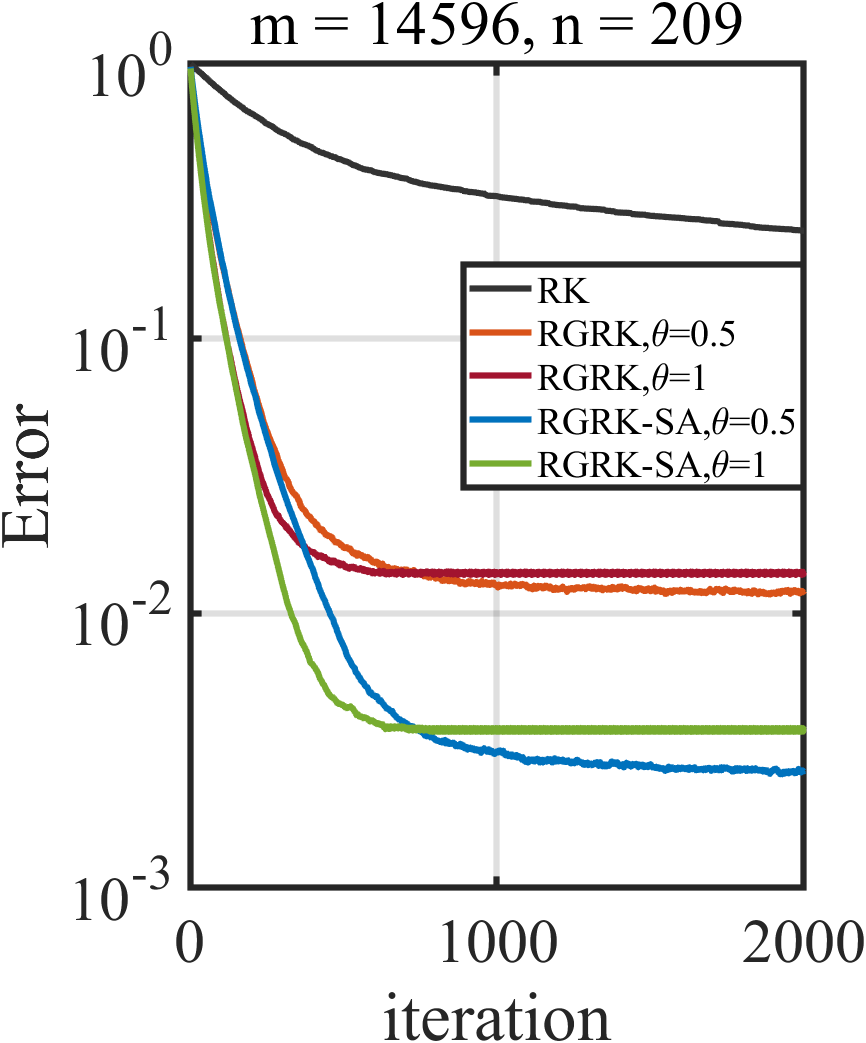}
		\includegraphics[width=0.33\linewidth,height=0.27\textwidth]{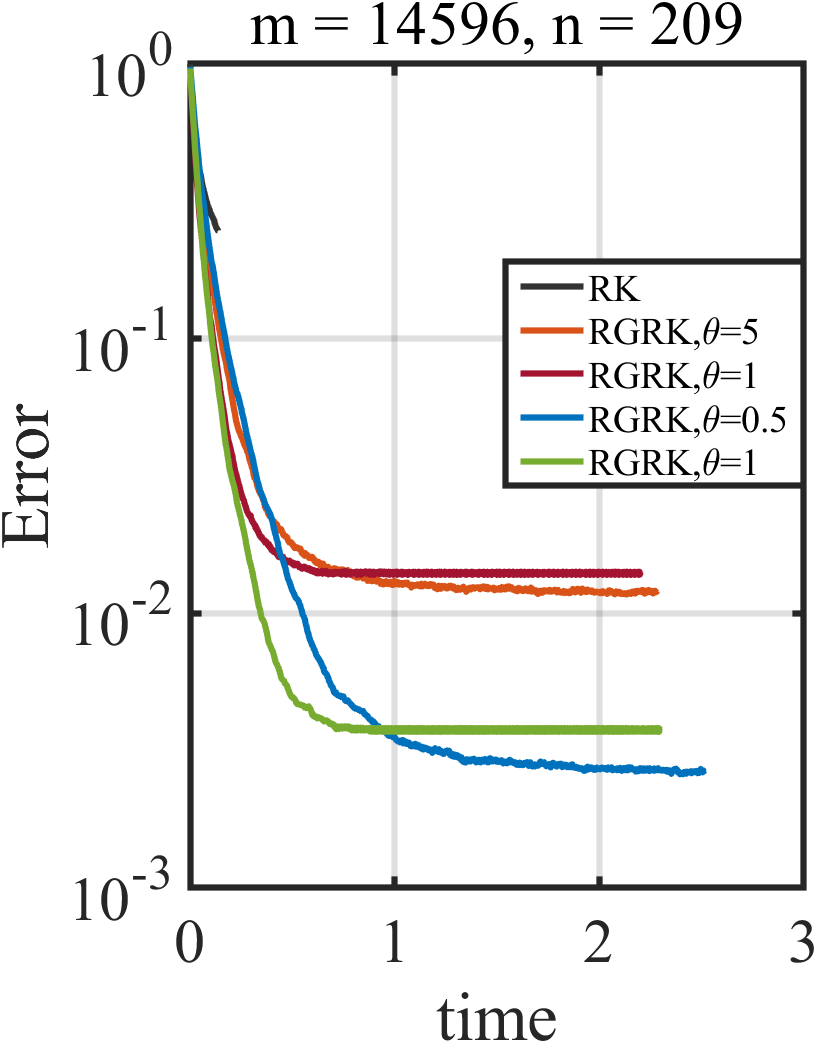}}
	\caption{Relative error and computing time after 2000 iterations of RK, RGRK, and RGRK-SA for real-world linear systems}
	\label{fig1}
\end{figure}

\begin{table}[H]
	\begin{center}
		\begin{minipage}{\textwidth}
			\caption{The information of real-world matrices from SuiteSparse Matrix Collection}
			\label{table2}
			\begin{tabular*}{\textwidth}{@{\extracolsep{\fill}}lccccc@{\extracolsep{\fill}}}
				\toprule
				\multicolumn{2}{c}{ name } &  ash958  & ash219 & abtaha1 &  abtaha2\\
				\midrule
				\multicolumn{2}{c}{ $m\times n$ } & $958\times 292$  & $219\times 85$ & $14596\times 209$ & $37932\times 331$\\	
				\multicolumn{2}{c}{ density } &  0.68\% &  2.35\%  &  1.68\% & 1.09\% \\
				\multicolumn{2}{c}{ cond($A$) } & 3.20  &  3.02 &  12.23 & 1.22 \\
				\bottomrule
			\end{tabular*}
		\end{minipage}
	\end{center}
\end{table}

\section{Conclusions}
\label{sec5}

In this work, we have proposed RGRK and RGRK-SA for solving doubly-noisy linear systems in the absence of exact measurement data, and provide convergence analysis for them. 
In particular, we have introduced a practical and effective scheme for denoising both doubly-noisy and standard noisy linear systems by combining with the signal averaging technique. We have shown that the RGRK-SA method converges to a more exact solution, effectively eliminating the influence of convergence horizon on solution accuracy in both RK and RGRK. Numerical experiments further confirm that the signal averaging technique reduces the effect of noise, demonstrating that our proposed methods achieve higher efficiency and accuracy compared with the standard RK method.

\section*{Acknowledgements}
All authors contributed to the study’s conception and design. The first draft
of the manuscript was written by Lu Zhang and all authors commented on previous versions of the manuscript. All authors read and approve the final manuscript and are all aware of the current submission.

\section*{Funding sources}
This work was funded by the National Natural Science Foundation of China (No.12471300), Independent Innovation Science Fundation of National University of Defense Technology (25-ZZCX-JDZ-14), and Program of China Scholarship Council (Project ID: 202406110004).

\section*{Data statement}
All data that support the findings of this study are included within the article

\footnotesize

\bibliographystyle{plain}
\bibliography{ref}
\end{document}